\documentclass[a4paper,12pt]{article}     
\usepackage{amsmath,amscd,amssymb}        
\usepackage{latexsym}                     
\usepackage[english, german]{babel}                

\usepackage{theorem}                 
\usepackage{color}

\textheight 22.3cm
\textwidth 14cm
\oddsidemargin 7mm

\newtheorem{theorem}{Theorem}
\newtheorem{corollary}{Corollary}

\newtheorem{conjecture}{Conjecture}


\theorembodyfont{\rmfamily}
\newtheorem{definition}{Definition}
\newtheorem{remark}{Remark}




%
\newenvironment{proof}{\begin{ProofwCaption}{Proof}}{\end{ProofwCaption}}
\newenvironment{proof*}[1]{\begin{ProofwCaption}{{#1}}}{\end{ProofwCaption}}
\newenvironment{ProofwCaption}[1]%
  {\addvspace\theorempreskipamount \noindent{\it #1.}\rm}%
  {\qed \par \addvspace\theorempostskipamount}
\newcommand{\qedsymbol}{{\rm $\Box$}}
\newcommand{\qed}{\hfill\qedsymbol}


\newcommand{\CC}{{\mathbb C}}

\newcommand{\QQ}{{\mathbb Q}}

\newcommand{\RR}{{\mathbb R}}
\newcommand{\ZZ}{{\mathbb Z}}

\def\H{{\mathcal H}}

\newcommand{\calJ}{{\cal J}}

\newcommand{\ulam}{\underline{\lambda}}

\newcommand{\orb}{{\,{\rm orb}}}
\newcommand{\Conj}{{\rm Conj\,}}
\newcommand{\age}{{\rm age}}
\newcommand{\ind}{{\rm Ind}}

\title{Mirror symmetry on levels of non-abelian Landau--Ginzburg orbifolds}
\author{Wolfgang Ebeling and Sabir M.~Gusein-Zade
\thanks{
Keywords: invertible polynomial, group action, dual pairs, orbifold Euler characteristic,
orbifold monodromy zeta function, E-function.
Math. Subject Classification: 14R20, 14J33, 57R18, 58K10.
}
}
\date{}

\begin{document}
\selectlanguage{english}

\maketitle

\begin{abstract}
We consider the Berglund--H\"ubsch--Henningson--Takahashi duality of Landau--Ginzburg orbifolds with a symmetry group generated by some diagonal symmetries and some permutations of variables. We study the orbifold Euler characteristics, the orbifold monodromy zeta functions 
and the orbifold E-functions of such dual pairs. We conjecture that we get a mirror symmetry between these invariants even on each level, where we call level the conjugacy class of a permutation. We support this conjecture by giving partial results for each of these invariants.
\end{abstract}

\setcounter{section}{-1}
\section{Introduction}\label{sec:intro}
Mirror symmetry is the observation originating from physics that there are pairs of objects (originally Calabi--Yau manifolds) with symmetries of certain invariants (originally Hodge numbers).
A well-known instance of mirror symmetry is the Berglund--H\"ubsch--Henningson duality of Landau--Ginzburg orbifolds with abelian groups
of diagonal symmetries. A Landau--Ginzburg orbifold is a pair
$(f,G)$ 
consisting of a quasihomogeneous polynomial $f$ of a special type (a so-called invertible polynomial) and a finite subgroup 
$G$
of the full group of symmetries of $f$ \cite{IV}. In the case that 
$G$
consists only of diagonal symmetries, P.~Berglund, T.~H\"ubsch, and M.~Henningson constructed a dual pair 
$(\widetilde{f},\widetilde{G})$ 
\cite{BH1, BH2}. In a series of papers \cite{EG-MMJ, EG-PEMS, EGT} the authors and A.~Takahashi showed that there are symmetries between certain invariants of such dual pairs such as the orbifold Euler characteristic of the Milnor fibre, the orbifold zeta function 
of the monodromy, and the orbifold E-function.

The object of this paper is to study these symmetries for non-abelian symmetry groups
$G$.
As the group $G$, we consider the semi-direct product of a finite group $G$ of diagonal symmetries and a subgroup $S$ of the group of permutations of the variables.
Takahashi suggested a method to associate a dual pair in this situation. This is called the BHHT-dual pair.  We started the study in \cite{EG-IMRN} where we found that we obtain symmetries only under a special condition on the permutation group called parity condition (PC). One can conjecture that there are natural generalizations of the results for the abelian case to this non-abelian case. Here we give some evidence for the conjecture that the above invariants are not only symmetric for BHHT-dual pairs with PC, but also their distributions among the levels of $S$, namely the conjugacy classes of the elements of $S$.

The plan of the paper is the following. In 
Section~\ref{sec:orb_notions}, we introduce the orbifold invariants which we want to consider, namely, the orbifold Euler characteristic, the orbifold zeta function 
and the orbifold E-function and we discuss relations 
between these invariants.
Section~\ref{sec:duality} is devoted to results 
about these invariants for BHH-dual pairs and the conjectural generalizations to BHHT-dual pairs. In 
Section~\ref{sec:among_levels}, we formulate our main conjectures about the distribution of the invariants among levels. In the remaining sections we indicate some results which support these conjectures. In 
Section~\ref{sec:orb_Euler}, we show that the orbifold Euler characteristics of BHHT-dual pairs coincide up to sign on any level for which the centralizer is cyclic and satisfies PC. In 
Section~\ref{sec:orb_zeta}, we show that, depending on the dimension, the orbifold zeta functions 
of BHHT-dual pairs with PC either coincide or are inverse to each other on the level of an element which generates its centralizer. For the level 1, we show that the mirror map yields a symmetry of  the orbifold E-functions of BHHT-dual pairs under certain conditions, see 
Section~\ref{sec:orb_E_function}. Finally, in 
Section~\ref{sec:example}, we compute the orbifold E-function for a rather simple example. Our conjecture is true for this example, but it turns out to be difficult (if possible) to define an analogue of the mirror map for other levels than 1.

\section{Orbifold Euler characteristic, zeta function,  
and E-function}\label{sec:orb_notions}
Let $X$ be a topological space (good enough, e.g., homeomorphic to a locally closed union of cells in a finite CW-complex)
and let $G$ be a finite group acting on $X$ (also with some goodness properties, e.g., with the action of elements of $G$ being cellular maps). Then the {\em orbifold Euler characteristic} of the pair $(X,G)$ is defined by
$$
\chi^{\orb}(X,G)=\frac{1}{\vert G\vert}
\sum_{{(g_1,g_2)\in G^2:}\atop{g_1g_2=g_2g_1}}
\chi(X^{\langle g_1, g_2\rangle})\,,
$$
where, for a subgroup $H\subset G$, $X^H$ is the fixed point set $\{x\in X : hx=x \mbox{ for all } h \in H\}$ of the
subgroup $H$, $\langle g_1,g_2, \dots\rangle$ is the subgroup
of $G$ generated by the elements $g_1,g_2, \ldots$ (see, e.g., \cite{HH}, \cite{AS}). By $\chi(\cdot)$ we always mean the additive Euler characteristic defined as the alternating sum of dimensions of the cohomology groups $H^q_c(X;\RR)$ with compact support. 
(For complex quasiprojective varieties it coincides with the ``usual'' Euler characteristic: the alternating sum of dimensions of the cohomology groups $H^q(X;\RR)$.)
One has an equivalent equation
for the orbifold Euler characteristic:
$$
\chi^{\orb}(X,G)=
\sum_{[g]\in {\Conj}G}
\chi(X^{\langle g\rangle}/C_G(g))\,,
$$
where ${\Conj}G$ is the set of conjugacy classes of
the elements of $G$, $g$ is a representative of the conjugacy class $[g]$, $C_G(g)$ is the centralizer of $g$ (see, e.g., \cite{HH}).
The {\em reduced orbifold Euler characteristic}
is defined by
$$
\overline{\chi}^{\orb}(X,G)=\chi^{\orb}(X,G)-
\chi^{\orb}({\rm pt},G)\,,
$$
where ${\rm pt}$ is the one-point set with the only
(trivial) action of the group $G$.

For a topological space $X$ and a proper self-map $h:X\to X$,
the zeta function of $h$ is the rational function in $t$
defined by
$$
\zeta_h(t)=\prod_{k\ge 0}
\left(\det({\rm id}-th^*_{\vert H^k_c(X;\RR)})\right)^{(-1)^k}\,,
$$
where $h^*$ is the action of $h$ on the cohomology groups $H^k_c(X;\RR)$ with compact support.

Let $X$ be a complex $n$-dimensional $G$-manifold with a proper $G$-equivariant (i.e., commuting with all the elements of $G$) self-map $h:X\to X$. For an element $g\in G$ and a fixed point
$x\in X^{\langle g\rangle}$ the {\em age} (or the
{\em shift number}) of the element $g$ at $x$ is
${\age_x}(g)=\sum_{i=1}^n \theta_i$, where $\theta_i\in[0,1)$ and
${\mathbf e}[\theta_i]:=\exp{2\pi i \theta_i}$, $i=1, \ldots , n$, are the eigenvalues
of the action of $g$ on the tangent space $T_xX$. The
age of an element is constant on a connected component
of the fixed point set $X^{\langle g\rangle}$. Let
$X^{\langle g\rangle}_{\alpha}$ be the set of points of $X^{\langle g\rangle}$ with the age of $g$ equal to $\alpha$. Then the {\em orbifold zeta function} of
$(h,X,G)$ is defined by
$$
\zeta^{\orb}_{h,X,G}(t)=
\prod_{[g]\in {\Conj}G} \prod_{\alpha \in \QQ}
\zeta_{h_{\vert X^{\langle g\rangle}_{\alpha}/C_G(g)}}
({\mathbf e}[-\alpha]\cdot t)\,.
$$

Let $f(x_1, \ldots, x_n)$ be a quasihomogeneous polynomial:
\begin{equation}\label{eqn:quasihomogeneous}
f(\lambda^{w_1}x_1, \ldots, \lambda^{w_n}x_n)=
\lambda^d f(x_1, \ldots, x_n)
\end{equation}
with the weights $w_1$, \dots, $w_n$ and the quasidegree $d$ being natural numbers.
Let $f$ be endowed with a finite group 
$G\subset GL(n,\CC)$ of linear automorphisms  preserving $f$, let $X$ be the Milnor fiber $V_f=\{f=1\}$ of $f$ and let $h:V_f\to V_f$ be the monodromy transformation
$$
h(x_1, \ldots, x_n)=
\left({\mathbf e}\left[\frac{w_1}{d}\right]x_1, \ldots,
{\mathbf e}\left[\frac{w_n}{d}\right]x_n
\right)\,.
$$
(In the context of Landau--Ginzburg models the monodromy transformation $h$ is often called {\em ``the exponential grading operator''.})
In this case the age of
an element $g\in G$ does not depend on the point
$x\in X^{\langle g\rangle}$ and one has the age function
${\age}:G\to \QQ_{\ge 0}$, where
${\age}(g)=\sum_{i=1}^n\theta_i$, $0\le \theta_i<1$,
with ${\mathbf e}[\theta_i]$ being the eigenvalues of $g$ on $\CC^n$. 
(This coincides with the age of the element $g$ considered as a map $V_f\to V_f$.)
In this case, the definition of the zeta function $\zeta^{\orb}_{f,G}(t)=\zeta^{\orb}_{h,V_f,G}(t)$ of $(f,G)$ was given in~\cite{EG-PEMS}. The {\em reduced orbifold zeta function} of the monodromy
is defined by
$$
\overline{\zeta}^{\orb}_{f,G}(t)=
\zeta^{\orb}_{f,G}(t)\left/ \prod_{[g]\in {\rm Cong\,}G}(1-{\mathbf e}[-{\age}(g)]t)   \right.{.}
$$
One can see that the (reduced) orbifold Euler characteristic of $(V_f,G)$ is the degree of the (reduced) orbifold zeta function 
as a rational function in $t$ (that is the degree of the numerator minus the degree of the denominator).

One can say that the orbifold zeta function of the monodromy
is the zeta function of the action of the monodromy transformation on the Fan--Jarvis--Ruan quantum cohomology group associated to $(f,G)$ (\cite{FJR}). A generating polynomial of the bigrading on this space is the orbifold E-function defined for an invertible polynomial with an abelian group of (diagonal) symmetries in~\cite{EGT}. Here we give a general definition for an arbitrary quasihomogeneous polynomial $f$ and for an arbitrary $G$ as it was made in~\cite{ET} for abelian subgroups $G \subset {\rm SL}(n,\CC)$ acting diagonally on the coordinates.

Let $f(x_1, \ldots, x_n)$ be a quasihomogeneous polynomial with an isolated critical point at the origin and let $G\subset GL(n,\CC)$ be a finite group
of transformations preserving $f$.
The cohomology group $H^{n-1}(V_f;\CC)$ is endowed with the mixed Hodge structure introduced by J.~H.~M. Steenbrink
in~\cite{Steenbrink}. Let us consider the $\QQ\times\QQ$-graded vector space
$\H_f=\displaystyle\bigoplus_{p,q\in \QQ}\H_f^{p,q}$ defined by
\begin{enumerate}
 \item[1)] if $p+q\ne n$, then $\H_f^{p,q}:=0$;
 \item[2)] if $p+q=n$, then $\H_f^{p,q}:=
 {\rm Gr}_{F^{\bullet}}^{[p]}H^{n-1}(V_f;\CC)_{{\mathbf e}[p]}$,
 where $H^{n-1}(V_f;\CC)_{{\mathbf e}[p]}$ is the eigenspace
 of the monodromy transformation $h$ corresponding to the eigenvalue ${\mathbf e}[p]$, $[p]$ is the integer part of $p$.
\end{enumerate}
If $n=1$, one has to consider the reduced cohomology group $\overline{H}^{n-1}(V_f;\CC)$. If $n=0$ (this case is important for the definition below), we assume
$H^{-1}(V_f;\CC)$ to be one dimensional with the trivial
action of $G$. This means that one considers
the ``critical point'' of the function of zero variables to be non-degenerate and thus to have the dimension of the cohomology group in dimension $(-1)$ to be equal to 1.

For any $g\in G$, the restriction $f^g$ of the polynomial $f$ to the fixed point set
$(\CC^n)^{\langle g\rangle}$ has an isolated critical point as well: see, e.g., 
\cite{Wall}. 
Let $n_g$ be the dimension of the fixed locus $(\CC^n)^{\langle g\rangle}$.
The centralizer $C_G(g)$ of the element $g$ acts on $(\CC^n)^{\langle g\rangle}$ and on the Milnor fiber of $f^g$. For $i=0,1$, let
$$
\H_{f,G,\overline{i}}:=
\bigoplus_{{[g]\in {\rm Conj\,}G}\atop{n_g\equiv i \bmod 2}}(\H_{f^g})^{C_G(g)}(-\age(g),-\age(g))\,,
$$
where $(\H_{f^g})^{C_G(g)}$ is the $C_G(g)$-invariant
part of $\H_{f^g}$, $(-{\age}(g),-{\age}(g))$
means the shift of the bigrading ($\overline{0}$ means {\em even}, $\overline{1}$ means {\em odd}).
Let
$$
E_{\overline{i}}(f,G)(t,\overline{t}):=
\sum_{p,q\in \QQ}
\dim_{\CC}(\H_{f,G,\overline{i}})^{p,q}
 t^{p-\frac{n}{2}}\overline{t}^{q-\frac{n}{2}}\,,
$$
where $t$ and $\overline{t}$ are two (independent) variables.

\begin{definition}
 The {\em (orbifold) {\rm E}-function} of $(f,G)$ is
 $$
 E(f,G)(t,\overline{t})=
 E_{\overline{0}}(f,G)(t,\overline{t})-
 E_{\overline{1}}(f,G)(t,\overline{t}).
 $$
\end{definition}

For an exponent $s:=q-\frac{n}{2}$ of the variable $\overline{t}$ in $E(f,G)(1,\overline{t})$ with non-zero coefficient,
${\mathbf e}[q]=(-1)^n{\mathbf e}[s]$ is an eigenvalue of the transformation induced by the monodromy transformation $h$ on 
$(H^{n_g-1}(V_{f^g};\CC))^{C_G(g)}$ multiplied by ${\mathbf e}[-\age(g)]$.
It gives the summand $(-1)^{n_g}\overline{t}^s$
in $E(f,G)(1,\overline{t})$ and the factor
$(1-(-1)^n{\mathbf e}[s]t)^{(-1)^{n_g-1}}$ in 
$\overline{\zeta}_{f,G}^{\orb}(t)$.
This implies that, if
$$
E(f,G)(1,\overline{t})=\sum_{s\in \QQ}a_s\overline{t}^s \quad (a_s \in \ZZ),
$$
then
$$
\overline{\zeta}_{f,G}^{\orb}(t)=
\prod_{s\in \QQ}(1-(-1)^n{\mathbf e}[s]t)^{-a_s}.
$$

In particular, the orbifold $E$-function determines
the orbifold monodromy zeta-function.

\begin{remark} As it was mentioned above, the orbifold $E$-function is a generating polynomial of the bigrading on the quantum
cohomology group of Fan--Jarvis--Ruan. One can consider other polynomials, e.g., 
$$E'(f,G)(t,\overline{t})=E_{\overline{0}}(f,G)(t,\overline{t})+
E_{\overline{1}}(f,G)(t,\overline{t})$$
defined in \cite{EG-MM} or the Poincar\'e polynomial defined in~\cite{Mukai2}. 
\end{remark} 

\section{Duality}\label{sec:duality}
An attempt to construct mirror symmetric pairs led
Berglund, H\"ubsch, and Henningson to the following concept. A quasihomogeneous polynomial
$f(x_1,\ldots,x_n)$ 
is called {\em invertible} if the number of monomials in it (with non-zero coefficients) is equal to the number $n$ of variables, i.e.,
\begin{equation}\label{eqn:invertible}
f(x_1,\ldots,x_n)=
\sum_{i=1}^n c_i\prod_{j=1}^n x_j^{E_{ij}}
\end{equation}
with $c_i\ne 0$, $E_{ij}\in\ZZ_{\ge 0}$, and the matrix
$E=(E_{ij})$ is invertible (over $\QQ$, i.e.,
$\det E\ne 0$). Without loss of generality one may assume that $c_i=1$ for all $i$ (this can be achieved by rescaling the variables).

\begin{remark}
 Usually, in the definition of an invertible polynomial, it is assumed in advance that
 an invertible polynomial
 is non-degenerate, i.e., has an isolated critical point at the origin.
 We prefer not to include this assumption into the initial
 definition since some statements about invertible
 polynomials and symmetries of their invariants
 hold without it
 (e.g., Equations~(\ref{eqn:MMJ}) and~(\ref{eqn:PEMS}) below). However, in this paper we shall only consider non-degenerate invertible polynomials dropping the adjective non-degenerate.
\end{remark}

One can show that a polynomial is invertible if and only if it is the Sebastiani--Thom sum (i.e., sum of polynomials in non-intersecting sets of variables)
of polynomials of the form:
\begin{enumerate}
 \item[1)] Fermat type: $x_1^{a_1}$, $a_1\ge 2$;
 \item[2)] chain type:
 $x_1^{a_1}x_2+x_2^{a_2}x_3+\cdots+x_m^{a_m}$, $a_m\ge 2$;
 \item[3)] loop type:
 $x_1^{a_1}x_2+x_2^{a_2}x_3+\cdots+x_m^{a_m}x_1$
(if $m$ is even, then neither $a_1=a_3=\ldots=a_{m-1}=1$ nor
 $a_2=a_4=\ldots=a_{m}=1$).
\end{enumerate}

\begin{remark}
a) Depending on the desire, the Fermat type polynomial can be considered as a particular case either of chain type or of loop type.\\
b) If $a_1=1$ in 1) or $a_m=1$ in 2), the corresponding polynomial
 is regular (does not have a critical point) at the origin.\\
c) If, in 3), the restriction in parentheses does not hold,
 the polynomial either is not quasihomogeneous (if there exists $a_i$ different from $1$, in~(\ref{eqn:quasihomogeneous}) some weights $w_i$ are forced to be equal to zero) or is not invertible
 (if all $a_i$ are equal to $1$, $\det E=0$).
\end{remark}

The group of diagonal symmetries of an invertible polynomial $f(x_1,\ldots,x_n)$ is
$$
G_f=\{(\lambda_1, \ldots, \lambda_n)\in(\CC^*)^n:
f(\lambda_1 x_1,\ldots,\lambda_n x_n)=f(x_1,\ldots,x_n)\}\,.
$$
One can show that $\vert G_f\vert=\vert\det E\vert$.

The {\em dual} (or the {\em transpose}) of the invertible polynomial~(\ref{eqn:invertible}) is
$$
\widetilde{f}(x_1,\ldots,x_n)=
\sum_{i=1}^n \prod_{j=1}^n x_j^{E_{ji}}.
$$
One can show that there is a canonical isomorphism between the group $G_{\widetilde{f}}$ of diagonal symmetries of $\widetilde{f}$ and the group
${\rm Hom}(G_f,\CC^*)$ of characters of $G_f$ (see details, e.g., in~\cite{EG-BLMS}). For a subgroup $G\subset G_f$, its
dual $\widetilde{G}$ is the subgroup of $G_{\widetilde{f}}$ consisting of the characters of $G_f$ vanishing (i.e., having the value $1$) on $G$.
The pair $(\widetilde{f}, \widetilde{G})$ is called
{\em Berglund--H\"ubsch--Henningson}- ({\em BHH}- for short) {\em dual} to the pair $(f,G)$.

In \cite{EG-MMJ, EG-PEMS, EGT} it was shown that, for BHH-dual pairs $(f,G)$ and $(\widetilde{f}, \widetilde{G})$, one has
\begin{eqnarray}
 \overline{\chi}^{\orb}(V_f,G) & = & (-1)^n\,
 \overline{\chi}^{\orb}(V_{\widetilde{f}},\widetilde{G})\,; \label{eqn:MMJ}\\
 \overline{\zeta}^{\orb}_{f,G}(t) & = &
 \left(\overline{\zeta}^{\orb}_{\widetilde{f},\widetilde{G}}(t)\right)^{(-1)^n}\, ; \label{eqn:PEMS} \\
 E(f,G)(t,\overline{t}) & = & (-1)^n
 E(\widetilde{f},\widetilde{G})(t^{-1},\overline{t})\,. \label{eqn:EGT}
\end{eqnarray}
Usually similar invariants of pairs are considered under
a (physically-motivated) assumption on  the group $G$ to be ``admissible''. Note that the
relations~(\ref{eqn:MMJ})~--(\ref{eqn:EGT}) hold for an arbitrary subgroup $G\subset G_f$ (without the assumption to be admissible).

The group $S_n$ of permutations on $n$ elements acts on the space $\CC^n$ by permuting the variables. Let $f$ be an invertible polynomial in $n$ variables and $G$ a group of diagonal
symmetries of $f$ (i.e., $G\subset G_f$) and let $S$ be
a subgroup of $S_n$ preserving $f$ and $G$
($\sigma^{-1}G\sigma=G$ for 
$\sigma\in S$). The action of $S$ on $G$ (by conjugation)
defines the semidirect
product $G\rtimes S$ with its action on $\CC^n$.
(The precise equations for the group structure on 
$G\rtimes S$ and for its action on $\CC^n$ can be found, e.g., in \cite[Remark 2.1]{EG-SIGMA}.) In this case the dual polynomial $\widetilde{f}$ and the dual subgroup
$\widetilde{G}\subset G_{\widetilde{f}}$ are also preserved by $S$ and, for the pair $(f,G\rtimes S)$,
one gets the so-called {\em Berglund--H\"ubsch--Henningson--Takahashi-} ({\em BHHT}- for short) {\em dual} pair
$(\widetilde{f},\widetilde{G}\rtimes S)$: see~\cite{EG-IMRN}. In~\cite{EG-IMRN}, it was shown that, for BHHT-dual pairs, the obvious analogues of 
the symmetries (\ref{eqn:MMJ})~-- (\ref{eqn:EGT}) do not hold in general, but may hold only under a special condition on the subgroup $S\subset S_n$ (called ``PC'': parity condition),
namely, for any subgroup
$T\subset S$, $\dim(\CC^n)^T\equiv n \bmod 2$.

One can conjecture that, under PC,
one has 
symmetries like (\ref{eqn:MMJ}), (\ref{eqn:PEMS}), and (\ref{eqn:EGT}), i.e.,
\begin{eqnarray}
 \overline{\chi}^{\orb}(V_f,G\rtimes S) & = & (-1)^n\,
 \overline{\chi}^{\orb}(V_{\widetilde{f}},\widetilde{G}\rtimes S)\,; \label{eqn:NC-MMJ} \\
 \overline{\zeta}^{\orb}_{f,G\rtimes S}(t) & = &
 \left(\overline{\zeta}^{\orb}_{\widetilde{f},\widetilde{G}\rtimes S}(t)\right)^{(-1)^n}\,; \label{eqn:NC-PEMS} \\
 E(f,G\rtimes S)(t,\overline{t}) & = & (-1)^n
 E(\widetilde{f},\widetilde{G}\rtimes S)(t^{-1},\overline{t})\,. \label{eqn:NC-EGT}
\end{eqnarray}

Equation~(\ref{eqn:NC-MMJ}) was proved in \cite{EG-PAMQ} for polynomials of loop type with $S$ satisfying PC and in \cite{EG-SIGMA} for a cyclic subgroup $S$ (also satisfying PC).

From the computations in~\cite{Priddis} one can derive the symmetries (\ref{eqn:NC-MMJ}) and (\ref{eqn:NC-PEMS}) for the following pairs $(f,G\rtimes S)$ (with Fermat type polynomials $f$):
\begin{enumerate}
 \item[1)] $f=x_1^4+\cdots +x_4^4$, $G=\langle J\rangle$, $S=\langle(123)\rangle$, where
 $J=({\mathbf e}[1/4], \ldots, {\mathbf e}[1/4])$ is the exponential grading operator (the monodromy transformation);
 \item[2)] $f=x_1^5+\cdots +x_5^5$, $G=\langle J\rangle$, $S=\langle(12)(34)\rangle$, where
 $J=({\mathbf e}[1/5], \ldots, {\mathbf e}[1/5])$.
\end{enumerate}

A symmetry similar to~(\ref{eqn:NC-EGT}) was proved
in~\cite{Mukai2} for the pairs $(f,G\rtimes S)$ with 
$f=x_1^{p-1}x_2+\cdots+x_p^{p-1}x_{1}$ (a loop), an arbitrary subgroup $G$ of $G_f\cong \ZZ_{(p-1)^p+1}$ 
and $S=\langle(1 2 \ldots p)\rangle$, where $p$ is an odd prime number. 
Moreover, one can derive from \cite{BI} a symmetry similar to the symmetry of \cite{Mukai2} for pairs $(f,G\rtimes S)$ with $f=x_1^p+ \cdots +x_p^p$ (a Fermat polynomial), $G$ is either $G_f$ or $G_f \cap {\rm SL}(p,\CC)$, $S$ satisfies PC, and $p$ is a prime number.

\section{Distribution of invariants among levels}\label{sec:among_levels}
Let $f$ be an invertible polynomial in $n$ variables and 
let $G\rtimes S$ be a group of its symmetries ($G\subset G_f$, $S\subset S_n$). The conjugacy class of an element
$(\underline{\lambda}, \sigma)\in G\rtimes S$ is
contained in $G\times [\sigma]$, where $[\sigma]\subset S$ is the conjugacy class of $\sigma$.
Therefore each orbifold invariant considered in Section~\ref{sec:orb_notions} is the sum or the product of
the parts corresponding to different conjugacy classes
of the elements of $S$. Namely,
$$
\chi^{\orb}(V_f, G\rtimes S)=\sum_{[\sigma]\in {\Conj S}}\chi^{\orb,[\sigma]}(V_f, G\rtimes S)\,.
$$
where
$$
\chi^{\orb,[\sigma]}(V_f, G\rtimes S)=
\sum_{{[g]\in {\Conj}G \rtimes S:}\atop{[g]\subset G\times [\sigma]}}
\chi(V_f^{\langle g\rangle}/C_G(g))\,;
$$
and
$$
\overline{\zeta}^{\orb}_{f, G\rtimes S}(t)=
\prod_{[\sigma]\in {\Conj S}}\overline{\zeta}^{\orb,[\sigma]}_{f, G\rtimes S}(t)\,.
$$
where
$$
\overline{\zeta}^{\orb,[\sigma]}_{f, G\rtimes S}(t)=
\prod_{[g]\in {\Conj}G \rtimes S: \atop [g]\subset G\times [\sigma]}\overline{\zeta}_{ h_{\vert V_{f^g}/C_G(g)}}
({\mathbf e}[-\age(g)]t) \,.
$$
Finally 
$$
E(f,G\rtimes S)(t,\overline{t})=\sum_{[\sigma]\in\Conj S}
E^{[\sigma]}(f,G\rtimes S)(t,\overline{t})\,,
$$
where
$$
E^{[\sigma]}(f,G\rtimes S)(t,\overline{t})=
 \sum_{p,q\in \QQ}
 \left(\dim_{\CC}(\H^{[\sigma]}_{f,G\rtimes S,\overline{0}})^{p,q}-
 \dim_{\CC}(\H^{[\sigma]}_{f,G\rtimes S,\overline{1}})^{p,q}\right)
 t^{p-\frac{n}{2}}\overline{t}^{q-\frac{n}{2}}\,,
$$
$$
\H^{[\sigma]}_{f,G\rtimes S,\overline{i}}:=
\bigoplus_{{[g]\in {\rm Conj\,}G\rtimes S, [g]\subset G\times[\sigma]}\atop{n_g\equiv i \bmod 2}}(\H_{f^g})^{C_{G \rtimes S}(g)}(-\age(g),-\age(g))\,.
$$

There are some indications for the following conjectures ($S$ satisfying PC). 

\begin{conjecture}\label{conj:chi}
 \begin{equation}\label{eqn:conj_chi}
 \chi^{\orb,[\sigma]}(V_f, G\rtimes S)=(-1)^n
 \chi^{\orb,[\sigma]}(V_{\widetilde{f}}, \widetilde{G}\rtimes S)\,.
 \end{equation}
\end{conjecture}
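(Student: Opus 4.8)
The plan is to fix a representative $\sigma$ of the level $[\sigma]$ and to compare, class by class, the contributions of the conjugacy classes $[g]\subset G\times[\sigma]$ on the $f$-side with those of the classes $[\widetilde g]\subset\widetilde G\times[\sigma]$ on the $\widetilde f$-side, exploiting that the two semidirect products share the same permutation factor $S$ and the same level. For $g=(\ulam,\sigma)$ the fixed locus $(\CC^n)^{\langle g\rangle}$ is the coordinate-adapted subspace supported on those cycles of $\sigma$ along which the product of the corresponding entries of $\ulam$ equals $1$ (on the other cycles every coordinate is forced to vanish); hence $n_g$ is the number of such cycles and $V_f^{\langle g\rangle}$ is the Milnor fibre $V_{f^g}$ of the restriction $f^g$, which by \cite{Wall} is quasihomogeneous with an isolated critical point. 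The first task is to show that $f^g$ is again \emph{invertible}, with an explicitly described induced diagonal symmetry group, so that $\chi\bigl(V_f^{\langle g\rangle}/C_{G\rtimes S}(g)\bigr)$ is recognised as an orbifold Euler characteristic of a genuine Landau--Ginzburg pair in $n_g$ variables.

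The core step is a compatibility lemma asserting that BHH-duality commutes with restriction to these permutation-twisted fixed loci: identifying variables along the surviving cycles turns $f^g$ into an invertible polynomial whose BHH-dual is, up to relabelling, the analogous restriction $\widetilde f{}^{\widetilde g}$ of $\widetilde f$. Granting this, the canonical identification of $\widetilde G$ with the characters of $G_f$ vanishing on $G$ restricts, along the cycle directions, to a BHH-duality between the induced diagonal groups, and it furnishes a bijection between the level-$[\sigma]$ classes $[g]$ on the $f$-side and $[\widetilde g]$ on the $\widetilde f$-side. Under this bijection the paired summands are orbifold Euler characteristics of BHH-dual pairs built from $f^g$ and $\widetilde f{}^{\widetilde g}$ in $n_g$ variables.

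With the reduction in hand, the remaining step is to invoke the abelian symmetry. When the centralizer $C_S(\sigma)$ is cyclic, the group induced on $V_{f^g}$ is abelian (up to the $\age$-shift), so each paired summand is a bona fide abelian orbifold Euler characteristic of a BHH-dual pair, and Equation~(\ref{eqn:MMJ}) applies to supply the factor $(-1)^{n_g}$. It then remains to upgrade $(-1)^{n_g}$ to the uniform sign $(-1)^n$: since $(\CC^n)^{\langle g\rangle}\subseteq(\CC^n)^{\langle\sigma\rangle}$ and the parity of $n_g$ is controlled by the permutation part, PC --- which guarantees $\dim(\CC^n)^T\equiv n\bmod 2$ for every subgroup $T\subseteq S$ --- forces $n_g\equiv n\bmod 2$ uniformly across the level, so $(-1)^{n_g}=(-1)^n$. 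Summing over the finitely many classes $[g]\subset G\times[\sigma]$ yields~(\ref{eqn:conj_chi}).

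The hard part is the non-cyclic case. Once $C_S(\sigma)$ fails to be cyclic, the group induced on the fixed locus $V_{f^g}$ is itself genuinely non-abelian, so the reduction no longer lands in the abelian world where Equation~(\ref{eqn:MMJ}) is available; what survives is a non-abelian orbifold Euler characteristic on $V_{f^g}$ whose mirror symmetry is a smaller instance of the very statement under consideration. This circularity is the essential obstruction, and it is why I expect the restriction argument to settle only the cyclic-centralizer case under PC, leaving the assertion for general levels conjectural.
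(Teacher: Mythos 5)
Your scoping is right---the paper, too, only establishes~(\ref{eqn:conj_chi}) when $C_S(\sigma)$ is cyclic and satisfies PC (Theorem~\ref{theo:chi})---but the mechanism you propose cannot work, for two concrete reasons. First, the ``compatibility lemma'' is asked to furnish a bijection between the conjugacy classes $[g]\subset G\times[\sigma]$ and the classes $[\widetilde{g}]\subset\widetilde{G}\times[\sigma]$, with a term-by-term matching of contributions. No such bijection can exist in general: the two sides have different numbers of classes, since $|\widetilde{G}|=|G_f|/|G|\neq|G|$ generically. For instance, take $f=x_1^p+\cdots+x_4^p$, $G=\{1\}$, $S=\langle(12)(34)\rangle$ (PC holds); on the $f$-side the level $[\sigma]$ contains the single class $[(1,\sigma)]$, while on the dual side $\widetilde{G}=G_f$ has order $p^4$ and the level $[\sigma]$ splits into roughly $p^2$ classes. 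Equation~(\ref{eqn:conj_chi}) is an identity between aggregate sums, not a class-by-class identity, and this is exactly how the paper proceeds: it stratifies $V_f$ by the coordinate tori $V_f^I$, uses an induction theorem of GLM type to reduce to the isotropy groups $S^I$, kills the strata where $f^I$ has fewer than $|I|$ monomials by a free $\CC^*$-action (a point your claim ``$f^g$ is invertible'' silently skips), and for the remaining strata manipulates the whole sums $\chi^I_{f,G}(\sigma,\sigma')$ over commuting pairs, using the cyclicity of the centralizer only to reduce $\chi^I_{f,G}(\sigma,\sigma')$ to $\chi^I_{f,G}(s^{m^*},1)$ (\cite[Proposition~4.2]{EG-SIGMA}) and then invoking the duality of \cite[Proposition~4.3 and (4.5)]{EG-SIGMA} for these aggregates.

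Second, your parity step is false. PC controls $\dim(\CC^n)^T$ only for subgroups $T\subset S$ of honest permutations; it says nothing about twisted elements. The fixed locus of $g=(\ulam,\sigma)$ is \emph{not} contained in $(\CC^n)^{\langle\sigma\rangle}$ (it is a twisted diagonal: for a transposition, $x_1=\lambda_1x_2$, $x_2=\lambda_2x_1$), and $n_g$ equals the number of cycles of $\sigma$ on which the product of the corresponding entries of $\ulam$ is $1$, so its parity varies with $\ulam$ within a single level. With $\sigma=(12)(34)$ and $\ulam$ whose cycle-product is $1$ on one cycle and $\neq 1$ on the other, one gets $n_g=1$ while $n=4$. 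This is precisely why the paper's definition of the E-function splits $\H_{f,G\rtimes S,\overline{i}}$ by the parity of $n_g$: that parity is genuinely not constant. So the uniform sign $(-1)^n$ cannot be obtained by arguing $(-1)^{n_g}=(-1)^n$ summand by summand; in the paper it emerges only at the level of the total sums, via the identity $\chi^I_{f,G}(\sigma,1)=(-1)^n\chi^{\overline{I}}_{\widetilde{f},\widetilde{G}}(\sigma,1)$ applied across the stratification.
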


\begin{conjecture}\label{conj:zeta}
 \begin{equation}\label{eqn:conj_zeta}
 \overline{\zeta}^{\orb,[\sigma]}_{f, G\rtimes S}(t)=
 \left(\overline{\zeta}^{\orb,[\sigma]}_{\widetilde{f}, \widetilde{G}\rtimes S}(t)\right)^{(-1)^n}.
 \end{equation}
\end{conjecture}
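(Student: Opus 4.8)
The plan is to prove~(\ref{eqn:conj_zeta}) one conjugacy class $[\sigma]$ at a time, by isolating the ``$\sigma$-sector'' of the orbifold zeta function and reducing it to the abelian duality~(\ref{eqn:PEMS}) on a fixed locus. First I would fix a representative $\sigma$ of $[\sigma]$ and note that every class $[g]\subset G\times[\sigma]$ has a representative $(\ulam,\sigma)$ with $\ulam\in G$; since conjugate elements contribute equal factors, $\overline{\zeta}^{\orb,[\sigma]}_{f,G\rtimes S}(t)$ is a product over the orbits of such $\ulam$ under the centralizer of $\sigma$. Each factor is the reduced zeta function of the monodromy on $V_{f^{(\ulam,\sigma)}}$, where $f^{(\ulam,\sigma)}$ is the restriction of $f$ to $(\CC^n)^{\langle(\ulam,\sigma)\rangle}$, shifted by ${\mathbf e}[-\age(\ulam,\sigma)]$ and passed to the quotient by the centralizer.

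The backbone is a restriction--transposition compatibility: for a permutation preserving $f$, the restriction of $f$ to its fixed locus is again an invertible polynomial, and transposition commutes with this restriction, so that the $\widetilde f$-side sectors are the BHH-duals of the $f$-side sectors, the induced diagonal group on one side being dual to the induced group on the other. I would make this precise cycle by cycle: a cycle of $\sigma$ contributes a fixed direction exactly when the holonomy of $\ulam$ around it is trivial; collecting these directions, $f$ restricts to an invertible polynomial and $\age(\ulam,\sigma)$ splits as a permutation contribution, common to both dual sides because $\sigma$ and the variable count $n$ are unchanged, plus a diagonal contribution governed by abelian duality. The outcome should be that the entire $\sigma$-sector is an abelian orbifold zeta function of an invertible pair on the fixed-locus directions, to which~(\ref{eqn:PEMS}) applies directly.

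Granting this identification, (\ref{eqn:PEMS}) yields the level-$[\sigma]$ equality with exponent $(-1)^{n_\sigma}$, where $n_\sigma=\dim(\CC^n)^{\langle\sigma\rangle}$. Here PC is decisive: taking $T=\langle\sigma\rangle$ gives $n_\sigma\equiv n\bmod 2$, hence $(-1)^{n_\sigma}=(-1)^n$, reproducing the exponent of~(\ref{eqn:conj_zeta}). I would first settle the case $C_S(\sigma)=\langle\sigma\rangle$, where $\sigma$ generates its own centralizer: then the centralizer of $(\ulam,\sigma)$ in $G\rtimes S$ is abelian, it acts through diagonal maps on the fixed directions, and the quotient and age shifts needed to invoke~(\ref{eqn:PEMS}) are transparent. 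This already establishes the conjecture for every level of a cyclic $S$ satisfying PC, in line with Section~\ref{sec:orb_zeta}.

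The main obstacle, which keeps the general statement conjectural, is the bookkeeping when $C_S(\sigma)$ is strictly larger than $\langle\sigma\rangle$. Then the quotient $C_S(\sigma)/\langle\sigma\rangle$ acts non-diagonally on the fixed locus, the induced diagonal group no longer encodes all of the relevant symmetry, and one must match the invariant subspaces of the two dual restrictions eigenvalue by eigenvalue, tracking how the shift ${\mathbf e}[-\age]$ and the centralizer-invariant Hodge pieces transform under transposition. Showing that these invariant pieces pair up so as to reproduce~(\ref{eqn:PEMS}) on each level \emph{separately}---rather than only after taking the product over all levels, which is all that the global relation~(\ref{eqn:NC-PEMS}) would give---is the crux; controlling the parities of all the intermediate fixed loci $(\CC^n)^T$ is exactly why PC is imposed for every subgroup $T\subset S$.
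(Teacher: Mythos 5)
Your strategy hinges on an unproven lemma, and it is precisely the hard point: the claim that the level-$[\sigma]$ part of the reduced orbifold zeta function is literally the reduced orbifold zeta function of an abelian BHH-dual pair on the fixed-locus directions, so that (\ref{eqn:PEMS}) can be applied as a black box. Two different groups enter the $\sigma$-sector and they do not coincide: the sectors are indexed by the holonomies of $\ulam$ around the cycles of $\sigma$ (a cycle contributes a fixed direction iff its holonomy is $1$), whereas the quotients are taken by the centralizers $C_{G\rtimes S}(\ulam,\sigma)=\langle{\rm Ker}\,A_\sigma\cap G,(\ulam,\sigma)\rangle$, which act on the fixed loci through the \emph{restriction} of ${\rm Ker}\,A_\sigma\cap G$, not through holonomies (for a $\sigma$-invariant $\underline{\nu}$, the holonomy around a cycle of length $c$ is the $c$-th power of its restriction). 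You would also need that the group induced on the $\widetilde{f}$-side fixed locus is exactly the BHH dual of the group induced on the $f$-side one; nothing in the proposal addresses this, and it is exactly where even-length loops cause trouble (there the mirror map $\psi$ fails to be injective, which is why Theorem~\ref{theo:E} carries its Assumption 1). The paper's proof of the known case (Theorem~\ref{theo:orb_zeta}, $C_S(\sigma)=\langle\sigma\rangle$) deliberately avoids any such identification: it decomposes $V_f$ into the tori $V_f^I$, reduces to $(V_f^I,G\rtimes S^I)$ by the induction Theorem~\ref{theo:induction}, shows that the level-$[\sigma]$ zeta function of each piece is a power $(1-t^{\ell_I})^{R_I}$ of a \emph{single} binomial whose base $\ell_I$ equals the abelian exponent of \cite{EG-PEMS} (this is where the centralizer structure and a disjointness argument are used), and then determines the exponent $R_I$ from the level Euler characteristic duality, Theorem~\ref{theo:chi}, via $\ell_I R_I=\chi^{\orb,[\sigma]}(V_f^I,G\rtimes S^I)$. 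In other words, the zeta identity is deduced from the Euler characteristic identity, not from (\ref{eqn:PEMS}).

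Second, your intermediate claim that the case $C_S(\sigma)=\langle\sigma\rangle$ ``already establishes the conjecture for every level of a cyclic $S$ satisfying PC'' is false. A cyclic group is abelian, so $C_S(\sigma)=S$ for every $\sigma$; the hypothesis $C_S(\sigma)=\langle\sigma\rangle$ then holds only when $\sigma$ generates $S$, hence for all non-trivial levels only when $|S|$ is prime (compare the remark after Theorem~\ref{theo:orb_zeta}, which lists ``cyclic groups of prime order''). In particular, the level $[1]$ is never covered by this case for non-trivial $S$, since $\langle 1\rangle\ne S$; the paper can treat level $1$ only under the extra hypotheses of Theorem~\ref{theo:E} (Corollary~\ref{cor:E}). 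Your diagnosis of where the difficulty lies when $C_S(\sigma)\supsetneq\langle\sigma\rangle$ is reasonable, but the case you regard as settled is not settled by the argument given.
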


\begin{conjecture}\label{conj:E}
 \begin{equation}\label{eqn:conj_E}
 E^{[\sigma]}(f,G\rtimes S)(t,\overline{t})=(-1)^n
 E^{[\sigma]}(\widetilde{f},\widetilde{G}\rtimes S)(t^{-1},\overline{t})\,.
 \end{equation}
\end{conjecture}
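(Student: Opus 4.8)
The plan is to deduce the level-wise symmetry~(\ref{eqn:conj_E}) from the already-established abelian symmetry~(\ref{eqn:EGT}) by constructing a \emph{level-preserving mirror correspondence} and checking that it realizes the duality class by class. The guiding observation is that the permutation group $S$ is common to both members of a BHHT-dual pair, and every conjugacy class $[g]$ of $G\rtimes S$ is contained in a single slice $G\times[\sigma]$; hence the expected correspondence between the conjugacy classes of $G\rtimes S$ and of $\widetilde{G}\rtimes S$ should fix the permutation part $\sigma$ and only exchange the diagonal data $G\leftrightarrow\widetilde{G}$. If such a correspondence can be made explicit, it will automatically respect the decomposition $E(f,G\rtimes S)=\sum_{[\sigma]}E^{[\sigma]}(f,G\rtimes S)$, so that~(\ref{eqn:conj_E}) would follow level by level rather than only in the aggregate form~(\ref{eqn:NC-EGT}).

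First I would fix a representative $\sigma$ of a level and describe the fixed locus of $g=(\ulam,\sigma)$. A point is fixed precisely when, along each cycle of $\sigma$, the product of the corresponding entries of $\ulam$ (the holonomy of the cycle) equals $1$; the locus $(\CC^n)^{\langle g\rangle}$ is then the coordinate subspace $L_g$ spanned by one representative coordinate per cycle of trivial holonomy. Using the Sebastiani--Thom splitting of an invertible polynomial into Fermat, chain, and loop blocks, one checks that $f^g=f\vert_{L_g}$ is again a non-degenerate quasihomogeneous polynomial, which for the blocks compatible with $\sigma$ is itself invertible, by the same restriction results already exploited in the cyclic and loop cases of~(\ref{eqn:NC-MMJ}). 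The residual group $C_{G\rtimes S}(g)$ acts on $L_g$ and on the Milnor fibre of $f^g$, and $\H^{[\sigma]}_{f,G\rtimes S,\overline{i}}$ assembles the invariant parts $(\H_{f^g})^{C_{G\rtimes S}(g)}$ shifted by $(-\age(g),-\age(g))$.

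The core of the argument is then to identify the BHH-dual of $f^g$ on $L_g$ with the analogous restriction $\widetilde{f}^{\,\widetilde{g}}$ of $\widetilde{f}$, compatibly with the character duality underlying $G\leftrightarrow\widetilde{G}$, so that~(\ref{eqn:EGT}) can be applied \emph{on $L_g$}. Granting this, the summand of $E^{[\sigma]}(f,G\rtimes S)$ attached to $[g]$ would be matched with the summand of $E^{[\sigma]}(\widetilde{f},\widetilde{G}\rtimes S)$ attached to the paired class, with the substitution $t\mapsto t^{-1}$ inherited from~(\ref{eqn:EGT}) for $f^g$. It then remains to absorb the age shift $-\age(g)$ and the sign $(-1)^{n_g}$ recorded by the parity $\overline{i}$ of $n_g=\dim L_g$: here the parity condition PC is to be invoked to force these per-class signs to combine into the single global factor $(-1)^n$ of~(\ref{eqn:conj_E}), exactly in the manner of the cyclic-centralizer computations indicated in Sections~\ref{sec:orb_Euler} and~\ref{sec:orb_zeta}. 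Summing over the conjugacy classes in the level would then yield~(\ref{eqn:conj_E}).

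The hard part will be precisely the identification in the previous paragraph, i.e.\ constructing the level-$[\sigma]$ mirror map itself. The character duality relating $G$ and $\widetilde{G}$ does not visibly descend to a duality between the residual groups acting on $L_g$ and on the corresponding dual locus, and the age of $g$ computed on $\CC^n$ need not agree with the age of the restricted datum on $L_g$; reconciling these shifts with $C_{G\rtimes S}(g)$-invariance uniformly across a whole level is exactly the difficulty flagged in Section~\ref{sec:example}, where even defining an analogue of the mirror map for levels other than $1$ appears problematic. A realistic intermediate goal is therefore to settle the case $[\sigma]=[1]$ first (Section~\ref{sec:orb_E_function}) and, using the fact that the orbifold $E$-function determines the orbifold zeta function, to combine this with Conjecture~\ref{conj:zeta} to obtain the specialization at $t=1$ of~(\ref{eqn:conj_E}) for every level, which already gives substantial evidence short of the full statement.
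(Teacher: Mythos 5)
The statement you set out to prove is one of the paper's conjectures: the paper itself does not prove it, and only supports it with partial results (Theorem~\ref{theo:chi} for the Euler characteristic on levels with cyclic centralizer, Theorem~\ref{theo:orb_zeta} for the zeta function when $C_S(\sigma)=\langle\sigma\rangle$, and Theorem~\ref{theo:E} for the $E$-function at level $1$). Your proposal does not prove it either, and the gap is not a detail: the entire content of the conjecture is concentrated in the step you yourself label ``the hard part'', namely the construction of a level-preserving mirror correspondence pairing conjugacy classes inside $G\times[\sigma]$ with classes inside $\widetilde{G}\times[\sigma]$ compatibly with ages, fixed-locus parities, and centralizer invariants. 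This is precisely the difficulty the paper isolates as open: in Section~\ref{sec:example}, for the simplest nontrivial case ($S=\ZZ_2$ acting on a single loop), the level-$[s]$ $E$-functions do agree, yet the authors were unable to define such a correspondence --- e.g.\ the monomial $dx_1dx_2$ could equally well be matched to the class of $(\frac{1}{28}(18,2,18,2),s)$ or of $(\frac{1}{28}(25,23,11,9),s)$, with no canonical choice. So your outline restates the conjecture as the existence of a correspondence rather than establishing it.

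There are also two concrete errors in your setup. First, the fixed locus of $g=(\ulam,\sigma)$ is \emph{not} ``the coordinate subspace spanned by one representative coordinate per cycle of trivial holonomy'': it is a diagonal linear subspace, one dimension per cycle with trivial holonomy; in the paper's example it is cut out by $x_3=ax_1$, $x_4=bx_2$, and the restricted polynomial acquires coefficients, $a'x_1^3x_2+b'x_2^5x_1$. Hence $f^g$ is not literally a restriction of $f$ to coordinates, is invertible only after rescaling, and its BHH dual is not canonically identified with any restriction of $\widetilde{f}$ --- so the plan of ``applying~(\ref{eqn:EGT}) on $L_g$'' has no well-defined dual pair to apply it to; moreover $C_{G\rtimes S}(g)$ contains elements with nontrivial permutation part, so the residual group is not a group of diagonal symmetries of $f^g$ and the character duality $G\leftrightarrow\widetilde{G}$ is not known to descend. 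Second, your fallback cannot close the gap: Conjecture~\ref{conj:zeta} is itself unproven in general (the paper establishes it only under the hypothesis of Theorem~\ref{theo:orb_zeta}), and even granted, the orbifold zeta function determines only $E^{[\sigma]}(f,G\rtimes S)(1,\overline{t})$ up to the ambiguity of shifting exponents $s$ by integers (since only ${\mathbf e}[s]$ enters the zeta function), not the two-variable $E$-function required in~(\ref{eqn:conj_E}). What remains of your proposal after these corrections is essentially the paper's own state of knowledge: the level-$1$ case via the Krawitz mirror map, plus the observation that a level-wise correspondence, if it existed, would yield the conjecture.
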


We shall call {\em levels} the conjugacy classes of the elements of $S$. The conjectures state that not only the discussed invariants are symmetric for BHHT-dual pairs, but
also their distributions among the levels.
In what follows, we shall show some examples supporting Conjectures~\ref{conj:chi}--\ref{conj:E}.

\section{Orbifold Euler characteristic}\label{sec:orb_Euler}
The proof of Theorem~4.1 in \cite{EG-SIGMA} gives equation
(\ref{eqn:conj_chi}) for a cyclic group $S$ with PC. Note that in this case the conjugacy class $[\sigma]$ in $S$ consists only of $\sigma$ itself.
Here we prove a somewhat stronger statement. Let $f$, $G$, and $S$ be as above.

Suppose that $K\subset H$ are finite groups.
One has an operation $\ind_K^H$ which converts $K$-spaces to
$H$-spaces. Namely, for a $K$-space $X$, the $H$-space 
$\ind_K^H X$ is $(H\times X)/\sim$, where the equivalence relation $\sim$ is defined by $(h_1,x_1)\sim(h_2,x_2)$ if there exists
$g\in K$ such that $h_1=h_2g^{-1}$, $x_1=gx_2$; the action of
$H$ on $\ind_K^H X$ is defined in a natural way: $h'(h,x)=(h'h,x)$. (A useful example:
for $G\subset K\subset H$, one has $\ind_K^H K/G=H/G$.
In particular, $\ind_K^H K/K=H/K$.)

\begin{theorem}\label{theo:chi}
Assume that $C_S(\sigma)$ is cyclic and satisfies PC (that is, it is contained in $A_n$). Then Equation~(\ref{eqn:conj_chi}) holds.
\end{theorem}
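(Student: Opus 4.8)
The plan is to reduce the computation of the level-$[\sigma]$ orbifold Euler characteristic to a sum indexed by the fixed locus of $\sigma$, and to exploit the fact that $C_S(\sigma)$, being cyclic and contained in $A_n$, matches the hypotheses already available from \cite{EG-SIGMA}. First I would fix a representative $\sigma$ of the level and analyze which conjugacy classes $[g]\subset G\times[\sigma]$ of $G\rtimes S$ contribute. Every such $g$ is of the form $(\ulam,\tau)$ with $\tau$ conjugate to $\sigma$; after conjugating inside $S$ I may assume $\tau=\sigma$, so the relevant data are the pairs $(\ulam,\sigma)$ modulo the action of the centralizer $C_{G\rtimes S}((\ulam,\sigma))$. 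The key structural point is that the fixed-point set $V_f^{\langle(\ulam,\sigma)\rangle}$ depends on $\sigma$ essentially through the fixed subspace $(\CC^n)^{\langle\sigma\rangle}$, on which the restriction $f^\sigma$ lives; the PC hypothesis guarantees $\dim(\CC^n)^{T}\equiv n\bmod 2$ for every subgroup $T\subset C_S(\sigma)$, which is exactly what controls the sign $(-1)^n$ in the symmetry.

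\textbf{Key steps.}
The second step is to rewrite $\chi^{\orb,[\sigma]}(V_f,G\rtimes S)$ as an orbifold Euler characteristic of the restricted data $(V_{f^\sigma}, G^\sigma\rtimes C_S(\sigma))$, where $G^\sigma$ denotes the part of $G$ acting on $(\CC^n)^{\langle\sigma\rangle}$, using the $\ind$-construction recalled just before the theorem to account for the passage from $C_S(\sigma)$ to $S$. Since $\ind_K^H K/G=H/G$, inducing from the centralizer to the full group does not change the quotient Euler characteristics appearing in the conjugacy-class sum, so the level-$[\sigma]$ contribution is intrinsic to the centralizer. Because $C_S(\sigma)$ is cyclic, its own conjugacy classes are singletons, and the level decomposition collapses to a clean cyclic-group computation to which the proof of Theorem~4.1 of \cite{EG-SIGMA} applies verbatim. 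The third step is to observe that BHH-duality commutes with restriction to $(\CC^n)^{\langle\sigma\rangle}$: the dual of $f^\sigma$ relative to the induced diagonal group is $\widetilde{f}^{\,\sigma}$ with its dual group, so the abelian symmetry~(\ref{eqn:MMJ}) applied to the restricted pair yields the sign $(-1)^{\dim(\CC^n)^{\langle\sigma\rangle}}$, which the PC hypothesis converts into $(-1)^n$.

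\textbf{Main obstacle.}
The delicate part will be verifying that the restriction-of-duality is genuinely functorial at the level of the induced groups, i.e.\ that $\widetilde{(G^\sigma)}$ coincides with $(\widetilde{G})^\sigma$ as subgroups of $G_{\widetilde{f}^{\,\sigma}}$, and that the age shifts $\age(g)$ transform correctly under this restriction. One must check that the character-duality isomorphism $G_{\widetilde f}\cong\Hom(G_f,\CC^*)$ is compatible with the $\sigma$-invariant subspaces and with the centralizer actions, so that the $C_{G\rtimes S}(g)$-invariant contributions on both sides are matched term by term. The remaining bookkeeping—correctly handling the induction $\ind_{C_S(\sigma)}^S$ and confirming that the cyclicity of $C_S(\sigma)$ makes each level a single centralizer-class computation—should then follow the pattern of \cite{EG-SIGMA} with only notational changes, so I expect the functoriality of duality under $\sigma$-restriction, rather than the Euler-characteristic arithmetic, to be where the real content lies.
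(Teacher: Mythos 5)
Your proposal follows a genuinely different route from the paper, and its load-bearing step fails. The gap is your Step 2: the level-$[\sigma]$ invariant is \emph{not} an orbifold Euler characteristic of the restricted data $(V_{f^\sigma},G^\sigma\rtimes C_S(\sigma))$ on the straight diagonal. The fixed-point set of an element $(\ulam,\sigma)$ is the \emph{twisted} diagonal $\{x_i=\lambda_i x_{\sigma^{-1}(i)}\}\cap V_f$, which in general is not contained in $(\CC^n)^{\langle\sigma\rangle}$; the level-$[\sigma]$ sum runs over all classes $[(\ulam,\sigma)]$ with $\ulam\in G$ taken modulo $A_\sigma(G)$ (where $A_\sigma(\underline{\mu})=\underline{\mu}(\sigma(\underline{\mu}))^{-1}$), and which twisted diagonals meet $V_f$, what their Euler characteristics are, and how the centralizers act on them is exactly the content that must be controlled --- none of it is visible on $(\CC^n)^{\langle\sigma\rangle}$. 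The paper's own example (Section~\ref{sec:example}) exhibits the mismatch: at level $[s]$ both classes $[(1,s)]$ and $[(\delta,s)]$ contribute through nonempty fixed curves lying in distinct $2$-planes $\{x_3=ax_1,\ x_4=bx_2\}$ (with $(a,b)\ne(1,1)$ for $(\delta,s)$), whereas in your restricted pair one has $G^\sigma=\{1,\delta^4\}$ and the element $(\delta^4,\sigma)$ has \emph{empty} fixed locus in $V_{f^\sigma}$, so the contributions do not match term by term. The same example kills the functoriality you invoke in Step 3: here $f^\sigma$ is (up to rescaling) the loop $u^3v+uv^5$ with $\vert G_{f^\sigma}\vert=14$, so $\widetilde{(G^\sigma)}$ has order $7$, while $(\widetilde{G})^\sigma$ has order $14$ and restricts isomorphically onto the full group $G_{(\widetilde{f})^\sigma}$; hence $\widetilde{(G^\sigma)}\ne(\widetilde{G})^\sigma$, i.e.\ duality does \emph{not} commute with $\sigma$-restriction at the level of groups. (Note also that (\ref{eqn:MMJ}) could not be quoted for the restricted pair anyway, since $G^\sigma\rtimes C_S(\sigma)$ is non-abelian.)

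The paper's mechanism is different and avoids all of this. It stratifies $V_f=\coprod_{I\ne\emptyset}V_f^I$ over coordinate tori $(\CC^*)^I$, uses the ${\rm Ind}$-construction for the isotropy groups $S^I$ of subsets $I\subset I_0$ (this, not the passage from $C_S(\sigma)$ to $S$, is what induction is needed for; your observation that the level is intrinsic to the centralizer is true, but for elementary conjugation reasons), discards strata where $f^I$ has fewer than $\vert I\vert$ monomials via a free $\CC^*$-action (noting that then $\widetilde{f}^{\overline{I}}$ is degenerate too, so both sides vanish), and on invertible strata expresses the contribution through commuting-pair quantities $\chi^I_{f,G}(\sigma,\sigma')$. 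Cyclicity of $C_S(\sigma)$ is used at exactly one point, and not where you place it: not because conjugacy classes of a cyclic group are singletons, but because \cite[Proposition~4.2]{EG-SIGMA} reduces $\chi^I_{f,G}(\sigma,\sigma')$ to $\chi^I_{f,G}(s^{m^*},1)$, where $s$ generates $C_{S^I}(\sigma)$, $\sigma=s^m$, $\sigma'=s^{m'}$, $m^*=\gcd(m,m',\vert C_{S^I}(\sigma)\vert)$. The duality then enters as $\chi^I_{f,G}(\sigma,1)=(-1)^n\chi^{\overline{I}}_{\widetilde{f},\widetilde{G}}(\sigma,1)$, which pairs the stratum $I$ of $f$ with the \emph{complementary} stratum $\overline{I}=I_0\setminus I$ of $\widetilde{f}$ --- not the $\sigma$-fixed locus of $f$ with the $\sigma$-fixed locus of $\widetilde{f}$. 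The missing idea in your proposal is precisely this coordinate-torus stratification together with the complementary-stratum duality.
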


\begin{proof}
 One has the decompositions
 $$
 \CC^n\setminus\{0\}
 =\coprod_{I\subset I_0,\, I\ne\emptyset}
 (\CC^*)^I,\quad V_f=\coprod_{I\subset I_0,\, I\ne\emptyset}
 V_f^I\,,
 $$
 where $I_0=\{1,\ldots, n\}$, $V_f^I=V_f\cap (\CC^*)^I$.
 Using the action of $S$ on $2^{I_0}$, one can write
 $$
 V_f=\coprod_{\calJ\in (2^{I_0}\setminus{\{\emptyset\}})/S}   {\ }\coprod_{J\in\calJ}V_f^J\,.
 $$
 Let $S^I\subset S$ be the isotropy subgroup of $I$ for the $S$-action on $2^{I_0}$. For any $I$ from $\calJ\in (2^{I_0}\setminus{\{\emptyset\}})/S$, one has
 $$
 \left(\coprod_{J\in\calJ}V_f^J, G\rtimes S\right)=
 {\rm Ind}_{G\rtimes S^I}^{G\rtimes S}(V_f^I,G\rtimes S^I)\,.
 $$
 
 Elements of the form $(\underline{\lambda}, \sigma)$
 may have fixed points in
 $\coprod\limits_{J\in\calJ}(\CC^*)^J$ only if
 $\sigma\in S^I$ for some $I\in\calJ$ and, in this case,
 $$
 \chi^{\orb, [\sigma]}({\rm Ind}_{G\rtimes S^I}^{G\rtimes S}(V_f^I,G\rtimes S^I)=
 \chi^{\orb, [\sigma]}(V_f^I, G\rtimes S^I)\,.
 $$
 (This is a simple version of Theorem~1 from \cite{GLM}; cf. Theorem~\ref{theo:induction} below.)
 
 For $I\subset I_0$, let $f^I$ be the restriction of $f$ to $\CC^I$. If $f^I$ has less than $\vert I\vert$ monomials, then $\chi^{\orb, [\sigma]}(V_f^I, G\rtimes S^I)=0$. This follows from the fact that, in this case,
 there exists a free $\CC^*$-action on
$(\CC^*)^I$
 preserving $f^I$ and commuting with the action of
 $G_{f^I}\rtimes S^I$ (cf.~\cite[Lemma~1]{EG-IMRN}).
 If $f^I$ has less than $\vert I\vert$ monomials, then
 $\widetilde{f}^{\overline{I}}$ has less than
 $\vert\overline{I}\vert$ monomials
($\overline{I}:=I_0\setminus I$). Therefore in this case
 $$
 \chi^{\orb, [\sigma]}(V_f^I, G\rtimes S^I)=
 \chi^{\orb, [\sigma]}(V_{\widetilde{f}}^{\overline{I}}, \widetilde{G}\rtimes S^{\overline{I}})=0\,.
 $$
 (Pay attention that $S^{\overline{I}}=S^I$.)
 
 Now let $f^I$ have $\vert I\vert$ monomials (and thus be invertible).
 The arguments in the proof of \cite[Theorem~4.1]{EG-SIGMA}
 give
 $$
 \chi^{\orb, [\sigma]}(V_f^I, G\rtimes S^I)=
 \frac{1}{\vert S^I\vert}
 \sum_{{\sigma'\in S^I}\atop{\sigma\sigma'=\sigma'\sigma}}
 \chi^I_{f,G}(\sigma,\sigma')\,,
 $$
 where
 $$
  \chi_{f,G}^I(\sigma,\sigma')=\frac{1}{\vert G\vert}
 \sum_{{(\underline{\lambda},\underline{\lambda}')\in G^2:}\atop
 {(\underline{\lambda},\sigma)(\underline{\lambda}',\sigma')=
 (\underline{\lambda}',\sigma')(\underline{\lambda},\sigma)}}
 \chi\left((V_f^I)^{\langle(\underline{\lambda},\sigma),(\underline{\lambda}',\sigma')\rangle}\right).
 $$ 
 If one defines 
 $$\chi_{f,G}^{\emptyset}(\sigma,\sigma'):=-\frac{1}{\vert G\vert}\vert\{(\underline{\lambda},\underline{\lambda}')
 \in G^2:
(\underline{\lambda},\sigma)(\underline{\lambda}',\sigma')=(\underline{\lambda}',\sigma')(\underline{\lambda},\sigma)
\}\vert,
$$
one has
$$
\overline{\chi}^{\orb, [\sigma]}(V_f, G\rtimes S)=
\sum_{\calJ=[I]\in 2^{I_0}/S: \atop \sigma \in S^I} \frac{1}{|S^I|} \sum_{\sigma' \in S^I: \atop \sigma \sigma' = \sigma' \sigma}   \chi_{f,G}^I(\sigma,\sigma') \, .
$$

Let $s$ be the generator of the centralizer $C_{S^I}(\sigma) \subset C_S(\sigma)$ and let $\sigma=s^m$, $\sigma'=s^{m'}$. Then \cite[Proposition~4.2]{EG-SIGMA} gives
\[
\chi_{f,G}^I(\sigma,\sigma') =   \chi_{f,G}^I(s^{m^*},1)
\]
with $m^*= \gcd(m,m', |C_{S^I}(\sigma)|)$.
 
The fact that, for all $I$ (including $I=\emptyset$),
\[
\chi_{f,G}^I(\sigma,1)= (-1)^n
 \chi_{\widetilde{f},\widetilde{G}}^{\overline{I}}(\sigma,1)
\]
is shown in the proof of \cite[Theorem~4.1]{EG-SIGMA} since \cite[Proposition~4.3]{EG-SIGMA} and Equation \cite[(4.5)]{EG-SIGMA} do not demand $S$ to be cyclic. The fact that $S$ is cyclic was used in \cite{EG-SIGMA} only for the reduction of $\chi^I_{f,G}(\sigma, \sigma')$ to $\chi_{f,G}^I(s^{m^*},1)$ (\cite[Proposition~4.2]{EG-SIGMA}). 
\end{proof}

\begin{corollary} If $S \subset S_n$ (satisfying PC) is such that the centralizer of each non-unit element is cyclic, then
\begin{equation}
 \overline{\chi}^{\orb}(V_f,G\rtimes S)  =  (-1)^n\cdot
 \overline{\chi}^{\orb}(V_{\widetilde{f}},\widetilde{G}\rtimes S)\,. \label{eqn:cor}
\end{equation}
\end{corollary}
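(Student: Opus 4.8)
The plan is to obtain (\ref{eqn:cor}) by proving the reduced level-wise identity
$$
\overline{\chi}^{\orb,[\sigma]}(V_f,G\rtimes S)=(-1)^n\,\overline{\chi}^{\orb,[\sigma]}(V_{\widetilde{f}},\widetilde{G}\rtimes S)
$$
for \emph{every} conjugacy class $[\sigma]\in\Conj S$ and then summing over $[\sigma]$, using that the reduced orbifold Euler characteristic is additive over levels (the $I=\emptyset$ summand in the formula for $\overline{\chi}^{\orb,[\sigma]}$ in the proof of Theorem~\ref{theo:chi} absorbing the subtraction of the point).

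For a non-unit $\sigma$ this is exactly Theorem~\ref{theo:chi}, provided its hypotheses hold. The assumption gives $C_S(\sigma)$ cyclic, and it remains to see that $C_S(\sigma)\subset A_n$. Since $S$ satisfies PC, every $\tau\in S$ has $\dim(\CC^n)^{\langle\tau\rangle}\equiv n\bmod 2$; as this dimension is the number of cycles of $\tau$, it forces $\tau\in A_n$, so $S\subset A_n$ and hence $C_S(\sigma)\subset A_n$. Thus $C_S(\sigma)$ is cyclic and contained in $A_n$, and Theorem~\ref{theo:chi} applies to every level $[\sigma]$ with $\sigma\neq 1$.

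The genuinely separate case, and the main obstacle, is the unit level $[\sigma]=[1]$: here $C_S(1)=S$ need not be cyclic, so Theorem~\ref{theo:chi} is unavailable. I would handle it directly, noting that the cyclic reduction \cite[Proposition~4.2]{EG-SIGMA} invoked in the proof of Theorem~\ref{theo:chi} is unnecessary when one argument is already the identity. Specializing the formula for $\overline{\chi}^{\orb,[\sigma]}$ to $\sigma=1$ gives
$$
\overline{\chi}^{\orb,[1]}(V_f,G\rtimes S)=\sum_{[I]\in 2^{I_0}/S}\frac{1}{|S^I|}\sum_{\sigma'\in S^I}\chi_{f,G}^I(1,\sigma')\,.
$$
The quantity $\chi_{f,G}^I(\cdot,\cdot)$ is symmetric in its two permutation arguments, since both the commuting condition and the subgroup they generate are unchanged under the swap; hence $\chi_{f,G}^I(1,\sigma')=\chi_{f,G}^I(\sigma',1)$. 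I would then apply the element-wise duality $\chi_{f,G}^I(\sigma',1)=(-1)^n\chi_{\widetilde{f},\widetilde{G}}^{\overline{I}}(\sigma',1)$, which by \cite[Proposition~4.3]{EG-SIGMA} and \cite[(4.5)]{EG-SIGMA} holds for all $I$ (including $I=\emptyset$) and all $\sigma'\in S\subset A_n$ without any cyclicity assumption, and finally re-index the outer sum by the $S$-equivariant involution $I\mapsto\overline{I}$, using $S^{\overline{I}}=S^I$. Since $\sigma'$ runs over the whole of $S^I=S^{\overline{I}}$, the right-hand side becomes $(-1)^n\,\overline{\chi}^{\orb,[1]}(V_{\widetilde{f}},\widetilde{G}\rtimes S)$, as desired.

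Summing the per-level identities over all $[\sigma]\in\Conj S$ then yields (\ref{eqn:cor}). The only delicate point is precisely the unit level: because cyclicity is assumed only for non-unit elements, one must verify by hand that the symmetry of $\chi_{f,G}^I(\cdot,\cdot)$ takes over the role of the cyclic-reduction step, so that the possibly non-cyclic centralizer $C_S(1)=S$ causes no difficulty.
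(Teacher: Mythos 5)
Your proposal is correct and takes essentially the same route as the paper's own proof: reduce to the level-wise identity, invoke Theorem~\ref{theo:chi} for non-unit levels (your check that PC for $S$ forces $C_S(\sigma)\subset A_n$, hence PC for the cyclic centralizer, is exactly what is needed), and handle the unit level via the symmetry $\chi^I_{f,G}(1,\sigma')=\chi^I_{f,G}(\sigma',1)$ together with the element-wise duality $\chi^I_{f,G}(\sigma',1)=(-1)^n\chi^{\overline{I}}_{\widetilde{f},\widetilde{G}}(\sigma',1)$ from \cite[(4.5)]{EG-SIGMA}. The paper's proof is a compressed version of precisely this argument, with your re-indexing $I\mapsto\overline{I}$ left implicit.
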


\begin{proof}
Equation (\ref{eqn:cor}) follows from its analogue for all the levels $[\sigma] \in {\rm Conj}\, S$. For $\sigma \neq 1$, this is just Theorem~\ref{theo:chi}. For $\sigma=1$ this is a direct consequence of Equation \cite[(4.5)]{EG-SIGMA} (since $\chi^I_{f,G}(1,\sigma')=\chi^I_{f,G}(\sigma',1)$).
\end{proof}

\begin{remark} \label{rem:cent_cyclic}
One can indicate the following examples of groups $S$ possessing the mentioned property, i.e., the centralizer of each non-unit element is cyclic:
\begin{enumerate}
\item[1)] $S$ is cyclic.
\item[2)] $S=D_k$ (the dihedral group) with $k$ odd. (Pay attention that $D_3=S_3$, but in order to satisfy PC, the number of variables $n$ should not be 3.)
\item[3)] Semidirect products $\ZZ_p \rtimes \ZZ_q$ with prime $p$ and $q$ such that $p \equiv 1 \, {\rm mod}\, q$. (There is only one non-abelian group of order $pq$ in this case.)
\end{enumerate}
\end{remark}

\section{Orbifold zeta function} \label{sec:orb_zeta} 
Let $f$, $G$, and $S$ be as above, $S$ satisfies PC.

\begin{theorem}\label{theo:orb_zeta}
 If $\sigma\in S$ is such that the centralizer $C_S(\sigma)$ is generated by $\sigma$, then
 \begin{equation}\label{eqn:orb_zeta}
  \overline{\zeta}^{\orb,[\sigma]}_{f,G \rtimes S}(t)=
  \left(\overline{\zeta}^{\orb,[\sigma]}_{\widetilde{f},\widetilde{G} \rtimes S}(t)\right)^{(-1)^n}\,.
 \end{equation}
\end{theorem}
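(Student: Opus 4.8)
The plan is to follow the architecture of the proof of Theorem~\ref{theo:chi}, replacing the additive bookkeeping of Euler characteristics by the multiplicative bookkeeping of zeta functions while carrying the age shifts ${\mathbf e}[-\age(g)]$ along. First I would use the same stratification $V_f=\coprod_{\calJ}\coprod_{J\in\calJ}V_f^J$ over the $S$-orbits $\calJ\in(2^{I_0}\setminus\{\emptyset\})/S$. The monodromy $h$ respects this stratification and orbifold zeta functions are multiplicative over disjoint unions of invariant pieces, so an induction formula for $\overline{\zeta}^{\orb,[\sigma]}$ entirely parallel to the one used for $\chi^{\orb,[\sigma]}$ (the multiplicative analogue of Theorem~1 of~\cite{GLM}; cf.\ Theorem~\ref{theo:induction}) gives, for a representative $I$ of $\calJ$,
$$
\overline{\zeta}^{\orb,[\sigma]}\bigl(\ind_{G\rtimes S^I}^{G\rtimes S}(V_f^I,G\rtimes S^I)\bigr)=
\overline{\zeta}^{\orb,[\sigma]}(V_f^I,G\rtimes S^I)\,,
$$
with a nontrivial contribution only when $\sigma\in S^I$ for some $I\in\calJ$. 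This reduces~(\ref{eqn:orb_zeta}) to a stratum-by-stratum comparison of $\overline{\zeta}^{\orb,[\sigma]}(V_f^I,G\rtimes S^I)$ with the corresponding factor for $(\widetilde{f},\widetilde{G})$ on the complementary stratum $\overline{I}$, recalling that $S^{\overline{I}}=S^I$.

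Next I would dispose of the degenerate strata. If $f^I$ has fewer than $\vert I\vert$ monomials, the free $\CC^*$-action on $(\CC^*)^I$ preserving $f^I$ and commuting with $G_{f^I}\rtimes S^I$ (cf.~\cite[Lemma~1]{EG-IMRN}) forces the corresponding reduced zeta factor to be trivial---the multiplicative counterpart of the vanishing $\chi^{\orb,[\sigma]}(V_f^I,G\rtimes S^I)=0$ used in the proof of Theorem~\ref{theo:chi}. Since in this case $\widetilde{f}^{\,\overline{I}}$ likewise has fewer than $\vert\overline{I}\vert$ monomials, the dual factor is trivial as well, and the identity $1=1^{(-1)^n}$ holds on these strata.

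The heart of the matter, and the step I expect to cost the most, is the invertible stratum, where $f^I$ has exactly $\vert I\vert$ monomials. Here the hypothesis $C_S(\sigma)=\langle\sigma\rangle$ enters decisively. For a class $[g]$ over $[\sigma]$ with $g=(\ulam,\sigma)$, the element $g$ acts trivially on its own fixed space $(\CC^I)^{\langle g\rangle}$, so $\sigma=g\cdot\ulam^{-1}$ acts there as the diagonal map $\ulam^{-1}$; and since every permutation commuting with $\sigma$ is a power of $\sigma$, the entire residual group $C_{G\rtimes S^I}(g)$ acts on the fixed Milnor fiber $V_{f^g}$ through diagonal transformations. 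After this diagonalization the level-$[\sigma]$ factors assemble into the orbifold zeta function of an invertible polynomial $f^g$ carrying a purely abelian (diagonal) symmetry group, to which the duality~(\ref{eqn:PEMS}) of~\cite{EG-PEMS} applies. Matching this against the contribution of $\widetilde{f}^{\,\overline{I}}$ on the complementary stratum yields~(\ref{eqn:orb_zeta}) stratum by stratum, the global exponent $(-1)^n$ arising, as in~\cite{EG-PEMS}, from the complementarity $\vert I\vert+\vert\overline{I}\vert=n$ together with PC, which synchronizes the relevant fixed-dimension parities with $n\bmod 2$ and thereby fixes the direction of the inversion.

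The delicate point is that, unlike the Euler characteristic, the orbifold zeta function remembers the full eigenvalue data: each factor depends on the monodromy spectrum on $V_{f^g}$ and on the age shift ${\mathbf e}[-\age(g)]$. One therefore cannot collapse a commuting pair $(\sigma,\sigma')$ to $(s^{m^*},1)$ as in Proposition~4.2 of~\cite{EG-SIGMA} without losing this information, which is precisely why mere cyclicity of $C_S(\sigma)$ (the hypothesis of Theorem~\ref{theo:chi}) no longer suffices. The stronger assumption $C_S(\sigma)=\langle\sigma\rangle$ confines every commuting permutation to $\langle\sigma\rangle$, making the residual action on $V_{f^g}$ diagonal and reducing the entire age-and-eigenvalue bookkeeping to the single element $\sigma$, where~(\ref{eqn:PEMS}) governs it. Verifying that the age shifts on the two sides correspond under the character duality $G_{\widetilde{f}}\cong{\rm Hom}(G_f,\CC^*)$---so that ${\mathbf e}[-\age(g)]$ is carried to the shift attached to the dual class---is the ingredient I expect to demand the most care.
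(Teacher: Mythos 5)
Your opening reduction (the stratification $V_f=\coprod_I V_f^I$ over $S$-orbits of subsets, the induction statement of Theorem~\ref{theo:induction}, and the triviality of the contribution of strata where $f^I$ has fewer than $\vert I\vert$ monomials) coincides with the paper's proof. But the core step of your argument has a genuine gap. The level-$[\sigma]$ zeta function on a stratum is, by definition, the product
$$
\prod_{[(\ulam,\sigma)]\subset G\times[\sigma]}
\zeta_{h_{\vert (V_f^I)^{\langle(\ulam,\sigma)\rangle}/C_{G\rtimes S^I}(\ulam,\sigma)}}
\left({\mathbf e}[-\age(\ulam,\sigma)]\,t\right)
$$
taken over the coset $G\times\{\sigma\}$, whose fixed loci are pairwise disjoint ``twisted diagonal'' subspaces (as the paper shows via the map $A_\sigma$). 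This object is \emph{not} the orbifold zeta function of any pair (invertible polynomial, abelian group of diagonal symmetries): such an orbifold zeta function would be a product over all elements of a single group, of zeta functions on fixed loci nested \emph{inside one} Milnor fiber $V_{f^g}$, with their own age shifts. So Equation~(\ref{eqn:PEMS}) cannot be invoked for the ``assembled'' object you describe --- there is no BHH-dual pair to which it applies. Moreover, even granting some such interpretation, your route would still require a class-by-class correspondence between the conjugacy classes lying over $[\sigma]$ in $G\rtimes S$ and those lying over $[\sigma]$ in $\widetilde{G}\rtimes S$, matching monodromy eigenvalues and age shifts; this is exactly the mirror-map-on-level-$[\sigma]$ that the paper's Section~\ref{sec:example} reports being unable to construct even in a very simple example. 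Deferring this to ``the ingredient that demands the most care'' leaves the proof without its essential step.

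The paper circumvents precisely this difficulty. Using $C_S(\sigma)=\langle\sigma\rangle$ it computes $C_{G\rtimes S}(\ulam,\sigma)=\langle{\rm Ker}\,A_\sigma\cap G,\,(\ulam,\sigma)\rangle$, shows (via the disjointness of the translated fixed loci and \cite[Proposition~3.6]{EG-SIGMA}) that each factor above is $(1-t^{m_I})^{r_I}$ with $r_I$ independent of $\ulam$, and then, using the equidistribution of the ages ${\mathbf e}[-\age(\ulam)]$ among roots of unity of order $k_I$ from \cite{EG-PEMS}, concludes that the \emph{whole} level-$[\sigma]$ zeta function on the stratum is a power of one single binomial, $(1-t^{\ell_I})^{R_I}$ with $\ell_I={\rm lcm}(m_I,k_I)$, and likewise $(1-t^{\widetilde{\ell}_{\overline{I}}})^{\widetilde{R}_{\overline{I}}}$ on the dual side with $\widetilde{\ell}_{\overline{I}}=\ell_I$. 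Once both sides are powers of the same binomial, Equation~(\ref{eqn:orb_zeta}) reduces to an equality of degrees, i.e.\ of orbifold Euler characteristics, which is exactly Theorem~\ref{theo:chi}. Note the irony: you correctly observe that the zeta function cannot be handled by collapsing commuting pairs as in \cite[Proposition~4.2]{EG-SIGMA}, but you then abandon the Euler-characteristic technology altogether, whereas the actual proof rests on the Euler characteristic theorem --- it enters through the degree of the zeta function, not through any spectral correspondence. The binomial-rigidity statement is the missing idea you would have to supply to make your stratum-by-stratum comparison close.
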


\begin{remark} Among the groups listed in Remark~\ref{rem:cent_cyclic}, those satisfying the condition of Theorem~\ref{theo:orb_zeta} are
\begin{enumerate}
\item[1)] cyclic groups of prime order;
\item[2)] the dihedral group $D_k$ with $k$ prime;
\item[3)] the non-abelian semidirect products $\ZZ_p \rtimes \ZZ_q$.
\end{enumerate}
\end{remark}

We shall use the following generalization
of~\cite[Theorem~1]{GLM}. Let $G$ be a subgroup of a finite group $K$, let $X$ be a complex $G$-manifold with a proper self-homeomorphism $h:X\to X$ of finite order
commuting with actions of the elements of $G$. The action of $h$ on $X$ can in a natural way be extended to
$\widehat{h}:\ind_G^K X\to\ind_G^K X$
($\widehat{h}(k,x)=(k,h(x))$ for $k\in K$, $x\in X$).

\begin{theorem}\label{theo:induction}
 One has
\begin{equation}\label{eqn:induction}
\overline{\zeta}^{\orb}_{h, X ,G}(t)=
\overline{\zeta}^{\orb}_{\widehat{h}, \ind_G^K X ,K}(t) \, .
 \end{equation}
\end{theorem}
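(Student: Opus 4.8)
The plan is to prove the identity conjugacy class by conjugacy class, reducing it to a piecewise correspondence of fixed-point loci that respects the self-map and, crucially, the age grading. The underlying combinatorics is exactly that of the Euler characteristic statement \cite[Theorem~1]{GLM}; the genuinely new ingredient for the zeta function is the compatibility of ages, so I would organize the argument so that this point stands out.

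First I would unwind the induced space. A choice of coset representatives for $K/G$ exhibits $\ind_G^K X$ as $\vert K:G\vert$ disjoint copies of $X$, on which $K$ acts by permuting the copies and by the $G$-action inside a copy. For $k\in K$ one checks that $k$ fixes the class $[(a,x)]$ precisely when $g_0:=a^{-1}ka\in G$ and $x\in X^{\langle g_0\rangle}$; moreover the identity $[(ka,x)]=[(a,g_0x)]$ shows that in the chart $y\mapsto[(a,y)]$ the map $k$ acts exactly as $g_0$ acts on $X$. Next I would carry out the double coset bookkeeping: fixing a class $[k]\in\Conj K$, the set $A=\{a\in K:a^{-1}ka\in G\}$ carries a left $C_K(k)$-action and a right $G$-action, and $a\mapsto a^{-1}ka$ identifies the double cosets $C_K(k)\backslash A/G$ with the $G$-conjugacy classes of elements of $G$ that are $K$-conjugate to $k$. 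For a representative $a$ with $g_0=a^{-1}ka$, a stabilizer computation identifies the residual group acting on $X^{\langle g_0\rangle}$ with $C_G(g_0)$, so the corresponding piece of $(\ind_G^K X)^{\langle k\rangle}/C_K(k)$ is $X^{\langle g_0\rangle}/C_G(g_0)$. Letting $[k]$ range over $\Conj K$, each class of $\Conj G$ occurs exactly once, namely inside $[k]=[g_0]_K$, so that
$$
\coprod_{[k]\in\Conj K}(\ind_G^K X)^{\langle k\rangle}/C_K(k)\ \cong\ \coprod_{[g]\in\Conj G}X^{\langle g\rangle}/C_G(g)\,.
$$

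Now I would upgrade this to an identity of orbifold zeta functions. Since $\widehat h(a,y)=(a,h(y))$ and $h$ commutes with $G$, the chart identification intertwines $\widehat h$ with $h$ (and one checks along the way that $\widehat h$ is well defined on $\ind_G^K X$, commutes with $K$, and descends to each quotient), so the two matched pieces carry the same self-map. By the local statement above, the eigenvalues of $k$ on the tangent space at $[(a,x)]$ equal those of $g_0$ at $x$; hence $\age_{[(a,x)]}(k)=\age_x(g_0)$, and the identification respects the age decomposition $X^{\langle g\rangle}=\coprod_{\alpha}X^{\langle g\rangle}_{\alpha}$. Thus each matched pair agrees as a space with a finite-order self-map together with its age value $\alpha$, so every local factor $\zeta_{\,\cdot\,}({\mathbf e}[-\alpha]\,t)$ on the $K$-side equals the corresponding factor on the $G$-side. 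Multiplicativity of the zeta function over disjoint unions then gives the identity for the unreduced orbifold zeta functions, and since the reduction acts on each piece through its (reduced) cohomology in the same way on both sides, Equation~(\ref{eqn:induction}) follows.

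The main obstacle is precisely the age-compatibility: unlike in the Euler characteristic setting of \cite{GLM}, the orbifold zeta function remembers the grading by age, so the proof genuinely needs the local claim that $k$ acts at the fixed point $[(a,x)]$ by the conjugate $g_0=a^{-1}ka\in G$, which forces equal tangent eigenvalues and hence equal ages and identical age strata. The remaining effort is routine but must be done carefully: the double coset reindexing of conjugacy classes, and the verification that $\widehat h$ preserves the fixed-locus and age decomposition so that the matched pieces are isomorphic as pairs (space, self-map).
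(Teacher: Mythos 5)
Your proof is correct and, at bottom, it is the same argument as the paper's: both establish the identity conjugacy class by conjugacy class, matching $(\ind_G^K X)^{\langle k\rangle}/C_K(k)$ with $\coprod_{[g']} X^{\langle g'\rangle}/C_G(g')$, the union being over the $G$-classes $[g']$ with $[g']_K=[k]_K$, compatibly with the self-maps and with the age strata. The difference is in how this matching is obtained. The paper formalizes ``triples'' (spaces with finite group actions on their parts and a commuting self-map of finite order), invokes an obvious version of Lemma~1 of \cite{GLM} to get the equivalence~(\ref{eqn:trip}), and then concludes using the identity $\zeta_{\widehat h_{\vert \ind_G^K X/K}}(t)=\zeta_{h_{\vert X/G}}(t)$ applied to the centralizer pairs $C_G(g')\subset C_K(g')$; you instead prove the correspondence from scratch, via the chart $y\mapsto[(a,y)]$, the observation that $k$ fixes $[(a,x)]$ iff $g_0=a^{-1}ka\in G$ fixes $x$ and then acts in that chart exactly as $g_0$, and the double-coset bookkeeping $C_K(k)\backslash\{a\in K: a^{-1}ka\in G\}/G \leftrightarrow \{[g']_G: [g']_K=[k]_K\}$ with residual stabilizer $C_G(g_0)$. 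What your route buys is self-containedness, and it makes fully explicit the one point that distinguishes the zeta-function statement from the Euler-characteristic statement of \cite{GLM}: that the local action of $k$ at a fixed point is by the conjugate $g_0$, so tangent eigenvalues, hence ages, hence age strata are preserved --- in the paper this compatibility is left implicit in the assertion that the equivalence~(\ref{eqn:trip}) respects the decomposition into the subsets of fixed age $\alpha$. What the paper's route buys is brevity and reuse of the \cite{GLM} formalism, which is also cited in the proof of Theorem~\ref{theo:chi}. One small point to tighten in your write-up: the passage from the unreduced to the reduced zeta functions is not a matter of ``reduced cohomology'' of the pieces, but of the discarded factors of the form $(1-{\mathbf e}[-\age(\cdot)]t)$ being equal on the two sides, which again follows from the equality of ages of matched classes (classes of $K$ not meeting $G$ have empty fixed locus and contribute nothing).
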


\begin{proof}
 To use the arguments similar to those in~\cite[Lemma~1 \& Theorem~1]{GLM}, let us consider triples $(X, \{G_i\}, h)$ consisting of a (topological) space $X$ with actions of different finite groups $G_i$ on different parts and with a self-homeomorphism $h$ of finite order commuting with all the actions. This means that $X$ is a finite disjoint union $X=\coprod_{i=1}^s X_i$ with the finite group $G_i$ acting on $X_i$ and $h:X_i\to X_i$ is such that $g\circ h=h\circ g$ for any $g\in G$ (cf.~\cite[Definition~2]{GLM}). We shall write $(X, \{G_i\}, h)=
 \coprod_{i=1}^s (X_i, G_i, h)$. Two triples
 $(X, \{G_i\}, h)$ and $(Y, \{G_i^{'}\}, h')$ are called equivalent if there exist partitions
 $(X, \{G_i\}, h)=\coprod_{j=1}^N (X_{(j)}, G_{(j)}, h)$
 and $(Y,\{G_i^{'}\}, h')=\coprod_{j=1}^N (Y_{(j)}, G_{(j)}^{'}, h')$ such that there exist 
 homeomorphisms $\psi_j: X_{(j)} \to Y_{(j)}$ and isomorphisms 
$\phi_j: G_{(j)} \to G'_{(j)}$ such that $\psi_j(gx)=\phi_j(g)\psi_j(x)$ and $(h' \circ \psi_j)(x)=(\psi_j \circ h)(x)$ for $x \in X_{(j)}$ and $g \in G_{(j)}$.

Let $X,G,K,h$ be as above. An obvious version of \cite[Lemma~1]{GLM} says that, for $g \in G$, 
\begin{equation}
((\ind_G^K X)^{\langle g \rangle}, C_K(g),h) \mbox{ and } \coprod_{[g'] \in {\rm Conj} \, G: \atop [g']_K = [g]_K} (\ind_{C_G(g')}^{C_K(g')} X^{\langle g' \rangle}, C_K(g'),\widehat{h}) \label{eqn:trip}
\end{equation} 
are equivalent
($[g]_K$ is the conjugacy class of $g$ in $K$).

It is obvious that 
\[
\zeta_{\widehat{h}_{\vert(\ind_G^K X/K)}}(t)=\zeta_{h_{\vert X/G}}(t) \, .
\]
One has 
\[
\zeta^{\orb}_{h,\ind_G^K X,K}(t) = \prod_{\alpha \in \QQ} \prod_{[k] \in {\rm Conj}\, K} \zeta_{h_{\vert (\ind_G^K X)^{\langle k \rangle}_\alpha/C_K(k)}}({\mathbf e}[-\alpha]t) \, .
\]
It is easy to see that, for $k\in K$,
$(\ind_G^K X)^{\langle k \rangle}$ is not empty if (and only if) there exists $g \in G$ such that $[g]_K=[k]_K$ (and $X^{\langle g \rangle}$ is not empty). Equation~(\ref{eqn:trip}) implies that
\begin{eqnarray*}
\zeta^{\orb}_{h,\ind_G^K X,K}(t)  & = &  \prod_{\alpha \in \QQ} \prod_{[g'] \in {\rm Conj}\, G} \zeta_{h_{\vert (\ind^{C_K(g')}_{C_G(g')} X)^{\langle g' \rangle}_\alpha/C_K(g')}}({\mathbf e}[-\alpha]t)\\
& = &  \prod_{\alpha \in \QQ} \prod_{[g'] \in {\rm Conj}\, G} \zeta_{h_{\vert X^{\langle g' \rangle}_\alpha/C_K(g')}}({\mathbf e}[-\alpha]t) \\
& = & \zeta^{\orb}_{h,X,G}(t) \,.
\end{eqnarray*}
\end{proof}

\begin{proof*}{Proof of Theorem~\ref{theo:orb_zeta}}
We shall use the decomposition 
\[ V_f=\coprod_{I\subset I_0,\, I\ne\emptyset} V_f^I
\]
described in the proof of Theorem~\ref{theo:chi}. (The monodromy operator $h$ respects this decomposition.)
Therefore
\[
\zeta^{\orb,[\sigma]}_{h,V_f,G \rtimes S}(t)= \prod_{\calJ\in (2^{I_0}\setminus{\{\emptyset\}})/S} \zeta^{\orb,[\sigma]}_{h,\coprod_{J \in \calJ}V_f^J,G \rtimes S}(t) \, . 
\]
Theorem~\ref{theo:induction} together with the remark that the parts of the zeta functions in Equation~(\ref{eqn:induction}) corresponding to different levels coincide gives
\[
\zeta^{\orb,[\sigma]}_{h,V_f,G \rtimes S}(t)= \prod_{\calJ\in (2^{I_0}\setminus{\{\emptyset\}})/S} \zeta^{\orb,[\sigma]}_{h,V_f^I,G \rtimes S^I}(t) 
\]
for a representative $I$ of $\calJ$.

To prove Equation~(\ref{eqn:orb_zeta}), it is sufficient to show that
\[
\zeta^{\orb,[\sigma]}_{h,V_f^I,G \rtimes S^I}(t) = \left( \zeta^{\orb,[\sigma]}_{\widetilde{h},V_{\widetilde{f}}^{\overline{I}},\widetilde{G} \rtimes S^{\overline{I}}}(t) \right)^{(-1)^n}
\]
for $\sigma \in S^I=S^{\overline{I}}$, $\sigma \neq 1$. We shall show that
\[
\zeta^{\orb,[\sigma]}_{h,V_f^I,G \rtimes S^I}(t) = (1-t^{\ell_I})^{R_I}
\]
for some exponent $R_I$, where the exponent $\ell_I$ is the one given by Equation \cite[(4.6)]{EG-PEMS}, i.e., the exponent in the equation of the orbifold zeta function 
on $V_f^I$ for the group $G$ itself (i.e., for trivial $S^I$).

Let $(\underline{\lambda},\sigma) \in G \rtimes S^I$, $\sigma \neq 1$. First let us show that 
\[ 
\zeta_{h_{\vert (V_f^I)^{\langle (\underline{\lambda}, \sigma)\rangle}/C_{G \rtimes S}(\underline{\lambda},\sigma)}}(t) = (1-t^{m_I})^{R'_I}
\]
for some exponent $R'_I$, where $m_I$ is the same exponent as in Equation \cite[(4.3)]{EG-PEMS}. The centralizer of the element $(\underline{\lambda},\sigma)$ is generated by the elements $(\underline{\nu},1)$ commuting with $(\underline{\lambda}, \sigma)$ and the element $(\underline{\lambda}, \sigma)$ itself. (This follows from the condition that $C_S(\sigma)=\langle \sigma \rangle$.) The element $(\underline{\nu}, 1)$ commutes with $(\underline{\lambda}, \sigma)$ if and only if $\underline{\nu} \in {\rm Ker}\, A_\sigma$, where $A_\sigma(\underline{\mu}):=\underline{\mu} (\sigma(\underline{\mu}))^{-1}$ (see \cite[Definition~3.2]{EG-SIGMA}). Therefore
\[
C_{G \rtimes S}(\underline{\lambda}, \sigma)= \langle {\rm Ker}\, A_\sigma \cap G, (\underline{\lambda}, \sigma) \rangle \, .
\]
The monodromy operator $h$ acts on $(V_f^I)^{\langle (\underline{\lambda}, \sigma)\rangle}$. The exponent in the binomial $(1-t^\bullet)$ in $\zeta_{h_{\vert (V_f^I)^{\langle (\underline{\lambda}, \sigma)\rangle}/C_{G \rtimes S}(\underline{\lambda},\sigma)}}$ corresponding to a point $x \in (V_f^I)^{\langle (\underline{\lambda}, \sigma)\rangle}$ is the minimal positive $d$ such that $h^d x \in C_{G \rtimes S}(\underline{\lambda}, \sigma) x$. Since $(\underline{\lambda},\sigma)$ acts trivially on $(V_f^I)^{\langle (\underline{\lambda}, \sigma)\rangle}$, one has $C_{G \rtimes S}(\underline{\lambda}, \sigma) x = ({\rm Ker}\, A_\sigma \cap G)x$.
For $\underline{\nu} \in G$, one has
\begin{eqnarray*}
\underline{\nu} (V_f^I)^{\langle (\underline{\lambda}, \sigma)\rangle} & = & (V_f^I)^{\langle (\underline{\nu},1) (\underline{\lambda}, \sigma) (\underline{\nu}^{-1},1)\rangle} \\
&= & (V_f^I)^{\langle (A_\sigma(\underline{\nu})\underline{\lambda}, \sigma)\rangle} \, .
\end{eqnarray*}
If $\underline{\nu} \not\in {\rm Ker}\, A_\sigma \cap G$, then $A_\sigma(\underline{\nu})\underline{\lambda} \neq \underline{\lambda}$ and therefore $(V_f^I)^{\langle (A_\sigma(\underline{\nu})\underline{\lambda}, \sigma)\rangle} \cap  (V_f^I)^{\langle (\underline{\lambda}, \sigma)\rangle}  = \emptyset$. Therefore, for $x \in (V_f^I)^{\langle (\underline{\lambda}, \sigma)\rangle}$, $h^dx \in C_{G \rtimes S}(\underline{\lambda}, \sigma)$ if and only if $h^dx \in Gx$. This minimal $d$ is just $m_I$ computed in \cite{EG-PEMS}. This implies that
\[
\zeta_{h_{\vert (V_f^I)^{\langle (\underline{\lambda}, \sigma)\rangle}/C_{G \rtimes S}(\underline{\lambda},\sigma)}}(t) = (1-t^{m_I})^{r_I(\underline{\lambda},\sigma)} \, ,
\]
where
\[
r_I(\underline{\lambda},\sigma)= \frac{\chi((V_f^I)^{\langle (\underline{\lambda}, \sigma)\rangle})}{m_I|{\rm Ker}\, A_\sigma \cap G|} \, .
\]
According to \cite[Proposition~3.6]{EG-SIGMA}, $\chi((V_f^I)^{\langle (\underline{\lambda}, \sigma)\rangle})$ does not depend on $\underline{\lambda}$. Therefore
\[
\zeta_{h_{\vert (V_f^I)^{\langle (\underline{\lambda}, \sigma)\rangle}/C_{G \rtimes S}(\underline{\lambda},\sigma)}}(t) = \prod_{[\underline{\lambda}] \in G/({\rm Ker}\, A_\sigma \cap G)} (1- {\mathbf e}[-\age(\underline{\lambda})]t^{m_I})^{r_I} \, ,
\]
where $r_I= r_I(\underline{\lambda}, \sigma)$. The conjugacy classes of $(\underline{\lambda}, \sigma)$ are of the same size equal to $|A_\sigma(G)|$. The exponents ${\mathbf e}[-\age(\underline{\lambda})]$ are roots of degree $k_I$ with equal multiplicities (see \cite{EG-PEMS}). Therefore
\[
\zeta^{\orb,[\sigma]}_{h,V_f^I,G \rtimes S^I}(t) = (1-t^{\ell_I})^{R_I} \, ,
\]
where $\ell_I={\rm lcm}(m_I,k_I)$ (see \cite[Lemma~3.5]{EG-PEMS}).

In the same way,
\[
\zeta^{\orb,[\sigma]}_{\widetilde{h},V_{\widetilde{f}}^{\overline{I}},\widetilde{G} \rtimes S^{\overline{I}}}(t) = (1-t^{\widetilde{\ell}_{\overline{I}}})^{\widetilde{R}_{\overline{I}}} \, .
\]
In \cite{EG-PEMS}, it was shown that $\widetilde{\ell}_{\overline{I}}= \ell_I$. Moreover,
\[ 
\ell_I R_I =  \chi^{\orb, [\sigma]}(V_f^I, G\rtimes S^I), \quad \widetilde{\ell}_{\overline{I}} \widetilde{R}_{\overline{I}} =  \chi^{\orb, [\sigma]}(V_{\widetilde{f}}^{\overline{I}}, \widetilde{G}\rtimes S^{\overline{I}}) \, .
\]
By Theorem~\ref{theo:chi}, $\ell_I R_I =(-1)^n \widetilde{\ell}_{\overline{I}} \widetilde{R}_{\overline{I}}$. This proves the statement for proper $I$.

The proof of
\begin{equation}
\zeta^{\orb,[\sigma]}_{h,V_f^{I_0},G \rtimes S}(t) = \left( \zeta^{\orb,[\sigma]}_{{\rm id},{\rm pt},\widetilde{G} \rtimes S}(t) \right)^{(-1)^n}
  \label{eqn:orb_zeta0}
\end{equation}
follows the same scheme. The corresponding zeta functions are equal to $(1-t^{\ell_{I_0}})^{R_{I_0}}$ and $(1-t^{\ell_\emptyset})^{R_\emptyset}$ respectively. The exponent $\ell_{I_0}$ computed above (in this case $k_{I_0}=1$ and $\ell_{I_0}=m_{I_0}$) is equal to the exponent $\ell_\emptyset$ computed in \cite{EG-PEMS} (denoted by $\widetilde{k}$ there; one can say that $\widetilde{k}=k_\emptyset$). The fact that $\ell_{I_0}R_{I_0}=(-1)^n\ell_\emptyset R_\emptyset$ is again a consequence of Theorem~\ref{theo:chi}.
\end{proof*}

\section{Orbifold E-function}\label{sec:orb_E_function}
We shall now show that, under certain conditions, the conjecture for the orbifold E-function, namely Conjecture~\ref{conj:E}, holds for level 1. For this we need the mirror map \cite{EGT, Krawitz}. The arguments in \cite{EGT} are a little bit sketchy, here we give the precise definition.

\begin{sloppypar}

Let $f(x_1, \ldots , x_n)$ be an invertible polynomial and $G\subset G_f$. For $I \subset I_0:=\{ 1, \ldots , n \}$, let $\CC^I:=\{ (x_1, \ldots , x_n) \in \CC^n  : x_i =0 \mbox{ for } i \not\in I\}$ and let $\Omega^p(\CC^I)$ be the $\CC$-vector space of regular $p$-forms on $\CC^I$. For $\ulam \in G$, the restriction $f^{\ulam}$ of the polynomial $f$ to the fixed point set $\CC^{I_{\ulam}}$, where $I_{\ulam}$ is the set of the indices of coordinates fixed by $\ulam$, has an isolated critical point at the origin as well. Therefore the Milnor algebra  $A_{f^{\ulam}}:= \CC[x_i : i \in I_{\ulam}]/(\frac{\partial f}{\partial x_i}: i \in I_{\ulam})$ is finite dimensional. Recall that $n_{\ulam}=| I_{\ulam}|$. and let $\Omega_{f^{\ulam}}:= \Omega^{n_{\ulam}}(\CC^{I_{\ulam}})/(df^{\ulam} \wedge \Omega^{n_{\ulam}-1}(\CC^{I_{\ulam}}))$. This is a free $A_{f^{\ulam}}$-module of rank one.  We define a map $\psi: \bigoplus_{\ulam \in G}  \Omega_{f^{\ulam}} \to \CC[G_{\widetilde{f}}]$ on the natural basis 
of $\bigoplus_{\ulam \in G}  \Omega_{f^{\ulam}}$ as follows: Let $\wedge_{i \in I_{\ulam}} x_i^{k_i}dx_i \in \Omega_{f^{\ulam}}$. Define the element $(b_1, \ldots , b_n) \in \ZZ^n$ as follows. Let 
\[ b_i := \left\{ \begin{array}{cl} k_i+1 & \mbox{for } i \in I_{\ulam}, \\ 0 & \mbox{for } i \not\in I_{\ulam}. \end{array} \right.
\]
and let
\[
(\beta_1, \ldots , \beta_n):=(b_1, \ldots , b_n)E^{-1},
\] 
where $E$ is the matrix defining $f$. We set 
\[
\psi(\wedge_{i \in I_{\ulam}} x_i^{k_i}dx_i):= ({\mathbf e}[\beta_1], \ldots , {\mathbf e}[\beta_n]).
\]
From the definition of $G_{\widetilde{f}}$, it follows that $({\mathbf e}[\beta_1], \ldots , {\mathbf e}[\beta_n]) \in G_{\widetilde{f}}$.

Let $\ulam \in G$. The space $\CC^{I_{\ulam}}$ admits a natural $G$-action by restricting the $G$-action on $\CC^n$ to $\CC^{I_{\ulam}}$ (well-defined since $G$ acts diagonally on $\CC^n$). The form $\wedge_{i \in I_{\ulam}} x_i^{k_i}dx_i$ is $G$-invariant if and only if $\psi(\wedge_{i \in I_{\ulam}} x_i^{k_i}dx_i) \in\widetilde{G}$. Therefore the map $\psi$ restricts to a map $\bigoplus_{\ulam \in G}  \Omega^G_{f^{\ulam}} \to \CC[\widetilde{G}]$. This is the {\em mirror map}.  

Let $f = f_1 \oplus  \cdots \oplus f_s$ be the decomposition of $f$ into a Sebastiani-Thom sum of atomic polynomials (blocks) such that each $f_\nu$, $\nu=1, \ldots, s$ is either of Fermat, chain, or loop type. 
An element of the symmetry group $S$
can either permute blocks, act as the identity on a block or as a rotation on a block of loop type, see \cite[p.~12312]{EG-IMRN}.
 
Let $f_\nu$ be a loop type polynomial. It can be characterized by its length
$L$ and the sequence $a_1$, \dots, $a_{L}$ of the exponents.
That is
$$
f_\nu(x_1, \ldots, x_{L})= x_1^{a_1}x_2+x_2^{a_2}x_3+
\ldots, x_{L}^{a_{L}}x_1\,.
$$
The sequence $a_1$, \dots, $a_{L}$ of the exponents considered up to cyclic order is called the {\em type} of the loop.

\end{sloppypar}

\begin{theorem} \label{theo:E}
Let $f(x_1, \ldots , x_n)$ be an invertible polynomial with a symmetry group $G\rtimes S$ ($G \subset G_f$) such that
\begin{enumerate}
 \item[1)] in the Sebastiani-Thom decomposition $f = f_1 \oplus  \cdots \oplus f_s$ into atomic polynomials,
 there is not more than one loop of any type with the length $L$ even;
\item[2)]  For any subset $I \subset I_0:=\{ 1, \ldots , n \}$, the group $S^I$ consists of permutations $\sigma$ with $\sigma|_I$ being an even permutation of $I$.
\end{enumerate}
Then one has
\[
E^{[1]}(f,G \rtimes S)(t, \overline{t}) = (-1)^n E^{[1]}(\widetilde{f},\widetilde{G} \rtimes S)(t^{-1}, \overline{t}) \, .
\]
\end{theorem}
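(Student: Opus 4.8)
The plan is to prove the level-$1$ identity
\[
E^{[1]}(f,G \rtimes S)(t,\overline{t}) = (-1)^n E^{[1]}(\widetilde{f},\widetilde{G} \rtimes S)(t^{-1},\overline{t})
\]
by reducing the $G\rtimes S$-statement to the purely diagonal $G$-statement, Equation~(\ref{eqn:EGT}), which is already known for BHH-dual pairs. The level-$1$ summand $E^{[1]}$ collects exactly those conjugacy classes $[g] \subset G\times[1]$, i.e.\ the elements of the form $(\ulam,1)$ with $\ulam\in G$. For such an element the fixed locus is $\CC^{I_{\ulam}}$, the restricted polynomial is $f^{\ulam}$, and the centralizer $C_{G\rtimes S}(\ulam,1)$ contains $G$ together with those permutations fixing $\ulam$; so the $C_{G\rtimes S}$-invariant part of $\H_{f^{\ulam}}$ is the $G$-invariant part further cut down by the action of a permutation subgroup. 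The strategy is therefore: first describe $E^{[1]}(f,G\rtimes S)$ as the $S$-invariant part of the diagonal E-function $E(f,G)$, then match this with the dual side using the mirror map $\psi$ defined above and the fact that $\psi$ is $S$-equivariant.

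\textbf{Step 1: reduce to an $S$-averaged diagonal E-function.} I would first show
\[
E^{[1]}(f,G\rtimes S)(t,\overline{t}) = \Big(E(f,G)(t,\overline{t})\Big)^{S},
\]
meaning that the level-$1$ part is obtained from the diagonal E-function by taking, block by block over the $\ulam\in G$, the $S$-invariant (more precisely $S^{I_{\ulam}}$-invariant, where $S^{I_{\ulam}}$ is the stabilizer of $I_{\ulam}$) subspace of the $G$-invariant cohomology. The point is that conjugacy of $(\ulam,1)$ by $(\uu,\tau)$ sends $\ulam$ to $\tau(\ulam)$, so the orbit sum over $[\ulam]$ inside $G$ under $S$, together with the centralizer averaging, exactly implements the projection onto $S$-invariants; hypothesis~2) ensures that the permutations $\sigma\in S^I$ act on $\Omega^{n_{\ulam}}(\CC^{I_{\ulam}})$ by $\operatorname{sgn}(\sigma|_I)=+1$, so the age-shift and the orientation sign introduced by permuting coordinates are both trivial and the bigrading is preserved. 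This is where the parity/evenness hypothesis does its work: it guarantees that passing to $S$-invariants is compatible with the $(p,q)$-bigrading and with the $(-1)^{n_g}$ signs in the definition of $E$.

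\textbf{Step 2: transport along the mirror map.} Having reduced both sides to $S$-invariant parts of the respective diagonal E-functions, I would invoke the mirror map $\psi:\bigoplus_{\ulam}\Omega_{f^{\ulam}}^{G}\to\CC[\widetilde{G}]$ constructed above, which realizes the BHH-level identity~(\ref{eqn:EGT}) at the level of a bigrading-reversing ($t\mapsto t^{-1}$) and sign-twisting ($(-1)^n$) isomorphism. The key new ingredient is that $\psi$ is $S$-equivariant: permuting the variables $x_1,\dots,x_n$ by $\sigma\in S$ acts on a monomial form $\wedge_{i\in I_{\ulam}}x_i^{k_i}dx_i$ by permuting the $b_i$, hence on $(\beta_1,\dots,\beta_n)=(b_1,\dots,b_n)E^{-1}$ by the corresponding permutation of the dual coordinates (using $\sigma^{-1}E\sigma=E$, which holds because $S$ preserves $f$). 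Therefore $\psi$ intertwines the $S^{I}$-action on the source with the $S^{\overline{I}}$-action on the target, and restricting~(\ref{eqn:EGT}) to $S$-invariants yields the desired level-$1$ identity. The evenness hypothesis~2) again guarantees no stray sign appears when restricting to invariants, and hypothesis~1) (at most one even-length loop of each type) ensures that the blocks on which $S$ acts by rotation do not produce an $S$-invariant contribution with anomalous sign, i.e.\ it rules out the degenerate cancellations that break the symmetry in the non-PC case.

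\textbf{Main obstacle.} The hard part will be Step~2, specifically verifying that the mirror map $\psi$ is genuinely $S$-equivariant on the nose, including the compatibility of the age-shift $(-\age(\ulam),-\age(\ulam))$ with permutations and the bookkeeping of signs coming from reordering the wedge factors $dx_i$ when a permutation acts. The subtlety is that $\psi$ is defined on a basis of monomial forms, and equivariance must be checked not just set-theoretically on basis elements but compatibly with the $\CC$-linear structure and the $A_{f^{\ulam}}$-module identifications; the orientation sign $\operatorname{sgn}(\sigma|_I)$ hidden in $\wedge_{i\in I}dx_i$ is precisely what hypothesis~2) is designed to kill, and making this cancellation explicit—together with handling the loop-rotation blocks governed by hypothesis~1)—is the technical heart of the argument. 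I expect the Fermat and chain blocks to behave transparently, while the even-length loop blocks, where $S$ can act by rotation, will require the most careful sign analysis.
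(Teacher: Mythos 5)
Your outline does follow the same route as the paper's proof: your Step 1 (identifying the level-$1$ part with the $S$-invariants, block by block over $\ulam\in G$, of the $G$-invariant spaces, with Assumption~2) killing the orientation sign of $\sigma|_{I_{\ulam}}$ on the top-degree forms) is exactly how the paper begins, and your Step 2 correctly identifies the Krawitz mirror map $\psi$ and the fact that $\psi$ sends an $S^{I_{\ulam}}$-orbit of monomials to a conjugacy class of $(\psi(\cdot),1)$ in $\widetilde{G}\rtimes S$. However, Step 2 as you state it has a genuine gap. Equation~(\ref{eqn:EGT}) is a \emph{numerical} identity between generating functions of dimensions, and ``restricting it to $S$-invariants'' is not a formal operation: for that you would need an $S$-equivariant \emph{bijection} of bases underlying it. The map $\psi$ does not provide one, because it is not injective: on a loop of even length $L$ with exponents $a_1,\ldots,a_L$, exactly two monomials (namely $\underline{k}$ with $k_{2i}=0$, $k_{2i-1}=a_{2i-1}-1$, and its complementary partner) are sent to $1\in\widetilde{G}$ --- this is precisely why the abelian formula of \cite{EGT} carries the multiplicities $m_{\ulam,\widetilde{\ulam}}=2^r$. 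So equivariance of $\psi$ alone cannot identify the invariant dimensions on the two sides; one must in addition prove that $\psi$ is injective on each $S^{I_{\ulam}}$-orbit of monomials, i.e.\ control how $S$ acts on the fibres of $\psi$ over these doubled values.

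This is where both hypotheses do their real work, and where your reading of Assumption~1) goes astray. The two preimages of $1$ inside an even loop are exchanged exactly by a cyclic shift of odd period, which is an odd permutation (of even order), excluded by Assumption~2) (cf.\ Remark~\ref{rem:assumption2}); and Assumption~1) --- at most one even loop of any given type --- forces $S$ to preserve each even loop rather than permute several copies of the same type, which is what makes the isotropy of those two preimages controllable at all. Assumption~1) is thus not about ``anomalous signs'' or ``degenerate cancellations'': it is about the non-injectivity of $\psi$. Finally, even once injectivity on orbits is established, the proof is not finished by ``restricting to invariants'': the paper converts the orbit/conjugacy-class matching into the counting formula~(\ref{eqn:GGtilde}) (an analogue of Proposition~14 of \cite{EGT}), a sum over pairs of conjugacy classes in ${\rm Conj}(G\rtimes S)\times{\rm Conj}(\widetilde{G}\rtimes S)$ weighted by $\widehat{m}_{\ulam,\widetilde{\ulam}}$, and reads off the theorem from the manifest symmetry of that expression. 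Your proposal is missing both the injectivity argument and this final counting step, so as written it would not go through.
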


\begin{proof} 
Let $\ulam \in G$. By Assumption 2),
the group $S^{I_{\ulam}}$ acts on $\Omega^G_{f^{\ulam}}$ with determinant 1. Let  $C((\underline{\lambda},1)):=C_{G \rtimes S^{I_{\ulam}}}((\ulam,1))$ be the centralizer of the element $(\ulam,1)$ in $G \rtimes S^{I_{\ulam}}$. Hence the orbit of a monomial $\wedge_{i \in I_{\ulam}} x_i^{k_i}dx_i$  in 
$\Omega^{C((\underline{\lambda},1))}_{f^{\ulam}}$
with respect to the group $S^{I_{\ulam}}$  corresponds to the element $\sum_{s \in S^{I_{\ulam}}} \wedge_{i \in I_{\ulam}} x_{s^{-1}(i)}^{k_{s^{-1}(i)}}dx_{s^{-1}(i)}$ in $\Omega^{C((\ulam,1))}_{f^{\ulam}}$. 
Since $(1,s)^{-1}(\underline{\mu},1)(1,s)=(s^{-1}(\underline{\mu}),1)$ for $\underline{\mu} \in \widetilde{G}$ and $s \in S$, this orbit is mapped by the map $\psi$ to the conjugacy class of the element $(\psi(\wedge_{i \in I_{\ulam}} x_i^{k_i}dx_i ),1)$ in $\widetilde{G} \rtimes S$.

The group $G_f$ is the direct sum of the groups $G_{f_\nu}$, $\nu=1, \ldots , s$. Therefore an element $\underline{\lambda} \in G_f$ can be written as $\underline{\lambda}= (\underline{\lambda}_1, \ldots , \underline{\lambda}_s)$ with $\underline{\lambda}_\nu \in G_{f_\nu}$.
One can see that the map $\psi$ can be applied 
separately to each part of a monomial corresponding to the atomic
summands of $f$.
Two orbits of monomials can have the same image under the map $\psi$
only if their parts corresponding to a loop of even length
are mapped to $1$ (under the restriction of $\psi$ to this loop).
According to Assumption~1), the group $S$ preserves a
loop $f_\nu$ of even length. Therefore $S$ preserves the monomials in the variables
of $f_\nu$ whose image under the map $\psi$ is equal to $1$. 
The two elements of a loop of type $a_1, \dots, a_{L}$ which are mapped to 1 by $\psi$ are the elements $\underline{k}=(k_1, \ldots, k_L)$ with $k_{2i}=0$, $k_{2i-1}=a_{2i-1}-1$  and $k_{2i-1}=0$, $k_{2i}=a_{2i}-1$ for $i=1, \ldots, \frac{L}{2}$ (see the proof of \cite[Proposition~12]{EGT}). They are exchanged by a shift of an odd period $\ell$: $a_{j+\ell}=a_j$, $j=1, \ldots, L$, where $j+\ell$ is considered modulo $L$. The order of this shift is even. By Assumption~2), such a shift is excluded, see also Remark~\ref{rem:assumption2}.
Therefore $\psi$ is injective on an orbit of $S^{I_{\ulam}}$.

As in \cite{EGT}, define, for $\underline{\lambda} \in G$ and $\widetilde{\underline{\lambda}} \in \widetilde{G}$, the number $m_{\underline{\lambda},\widetilde{\underline{\lambda}}}$ as $m_{\underline{\lambda},\widetilde{\underline{\lambda}}} := 2^r$ where $r$ is the number of indices $\nu$ such that $f_\nu$ is a loop with an even number of variables and both components $\underline{\lambda}_\nu$ and $\widetilde{\underline{\lambda}}_\nu$ are equal to 1. 
Define
\[ \widehat{m}_{\underline{\lambda},\widetilde{\underline{\lambda}}} := \left\{ \begin{array}{cl} 0 & \mbox{if } \widetilde{\underline{\lambda}} \not\in \widetilde{G}_{(I_{\underline{\lambda}})} \, , \\ m_{\underline{\lambda},\widetilde{\underline{\lambda}}} & \mbox{otherwise}, \end{array} \right.
\]
where 
$\widetilde{G}_{(I_{\ulam})}$ is the set of elements of $\widetilde{G}$ which are images under the map $\psi$ of $\Omega_{f^{\ulam}}^G$.
Note that the number $\widehat{m}_{\underline{\lambda},\widetilde{\underline{\lambda}}}$ only depends on the conjugacy classes of the elements $(\underline{\lambda},1)$ and $(\widetilde{\underline{\lambda}},1)$.

Let $[(\underline{\lambda},1)]$ denote the conjugacy class of $(\underline{\lambda},1) \in G \rtimes S$. As in \cite{EGT}, one can derive an analogue of \cite[Proposition~14]{EGT}:
\begin{equation} \label{eqn:GGtilde}
E^{[1]}(f,G \rtimes S)(t, \overline{t}) = \sum \widehat{m}_{\underline{\lambda},\widetilde{\underline{\lambda}}}(-1)^{n_{\underline{\lambda}}}(t \overline{t})^{{\rm age}(\underline{\lambda}) - \frac{n-n_{\underline{\lambda}}}{2}} \left( \frac{\overline{t}}{t} \right)^{{\rm age}(\widetilde{\underline{\lambda}}) - \frac{n-n_{\widetilde{\underline{\lambda}}}}{2}} ,
\end{equation}
where the sum is over pairs of conjugacy classes $([(\underline{\lambda},1)],[(\widetilde{\underline{\lambda}},1)])$  in ${\rm Conj}(G \rtimes S) \times {\rm Conj}( \widetilde{G} \rtimes S)$. 
As in \cite{EGT}, one can see that this gives the formula of the theorem.
\end{proof}

\begin{remark}
The (somewhat strange) assumptions in Theorem~\ref{theo:E} are explained by the fact that they permit to make the proof close to the one
in~\cite{EGT} (or rather using the arguments from there), in particular to get Equation~(\ref{eqn:GGtilde})
(a version of Equation~(4.3) from \cite{EGT}).
Assumption~2) permits to identify collections of
monomials with invariant elements in the cohomology group.
Assumption~1) is needed to control the symmetry of
preimages of unit elements. Just for loops of even length the map $\psi$ is surjective, but the element 1 has two preimages.
If there is more than one loop of a type of this sort, there are several pairs of the preimages of 1
and it seems to be very difficult to control the action of $S$ on them: they may have different isotropy subgroups.
\end{remark}

\begin{remark} \label{rem:assumption2}
 Assumption~2) (under the condition that Assumption~1) holds) means the following:  in the Sebastiani-Thom decomposition $f = f_1 \oplus  \cdots \oplus f_s$ into atomic polynomials,
 \begin{enumerate}
  \item[a)] there is no loop of even length  $L$ such that an element of $S$ acts on it by a permutation of order $k$ with $L/k$ odd;
  \item[b)] for any loop of odd length, its orbit under the group $S$ consists of an odd number of its copies (including the loop itself);
  \item[c)] for any chain, its orbit under the group $S$ consists of an odd number of its copies (including the chain itself).
  \end{enumerate}
\end{remark}

By the remarks at the end of Section~\ref{sec:orb_notions}, Theorem~\ref{theo:E} implies the following corollary.
\begin{corollary} \label{cor:E}
Under the assumptions of Theorem~\ref{theo:E}, one has 
\[
  \overline{\zeta}^{\orb,[1]}_{f,G \rtimes S}(t)=
  \left(\overline{\zeta}^{\orb,[1]}_{\widetilde{f},\widetilde{G} \rtimes S}(t)\right)^{(-1)^n}\,.
\]
\end{corollary}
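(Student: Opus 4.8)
The plan is to deduce the corollary directly from the level-$1$ E-function identity of Theorem~\ref{theo:E}, by feeding it into the general dictionary between the orbifold E-function and the reduced orbifold zeta function established at the end of Section~\ref{sec:orb_notions}, after checking that this dictionary is compatible with the decomposition into levels. No genuinely new computation is required beyond tracking signs.

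First I would observe that the passage from the E-function to the zeta function recorded at the end of Section~\ref{sec:orb_notions} is performed eigenvalue by eigenvalue, and each eigenvalue arises from a single conjugacy class $[g]\in\Conj(G\rtimes S)$ (namely from the action of the monodromy on $(H^{n_g-1}(V_{f^g};\CC))^{C_{G\rtimes S}(g)}$ twisted by ${\mathbf e}[-\age(g)]$). The level of such a class is the conjugacy class $[\sigma]\subset S$ of its permutation part, and, by the definitions in Section~\ref{sec:among_levels}, both $E^{[\sigma]}$ and $\overline{\zeta}^{\orb,[\sigma]}$ are respectively the sum and the product of exactly the contributions of the classes $[g]$ with $[g]\subset G\times[\sigma]$. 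Hence the very argument that yields
\[
E(f,G\rtimes S)(1,\overline{t})=\sum_{s\in\QQ}a_s\overline{t}^s\ \Longrightarrow\ \overline{\zeta}^{\orb}_{f,G\rtimes S}(t)=\prod_{s\in\QQ}(1-(-1)^n{\mathbf e}[s]t)^{-a_s}
\]
applies verbatim after restricting to a fixed level. In particular, writing $E^{[1]}(f,G\rtimes S)(1,\overline{t})=\sum_{s}a_s\overline{t}^s$ and $E^{[1]}(\widetilde{f},\widetilde{G}\rtimes S)(1,\overline{t})=\sum_{s}\widetilde{a}_s\overline{t}^s$, one has $\overline{\zeta}^{\orb,[1]}_{f,G\rtimes S}(t)=\prod_{s}(1-(-1)^n{\mathbf e}[s]t)^{-a_s}$ and $\overline{\zeta}^{\orb,[1]}_{\widetilde{f},\widetilde{G}\rtimes S}(t)=\prod_{s}(1-(-1)^n{\mathbf e}[s]t)^{-\widetilde{a}_s}$.

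Next I would specialise Theorem~\ref{theo:E} at $t=1$; since then $t^{-1}=1$, the theorem gives
\[
E^{[1]}(f,G\rtimes S)(1,\overline{t})=(-1)^n\,E^{[1]}(\widetilde{f},\widetilde{G}\rtimes S)(1,\overline{t}),
\]
so the coefficients satisfy $a_s=(-1)^n\widetilde{a}_s$, equivalently $\widetilde{a}_s=(-1)^n a_s$ (using $((-1)^n)^2=1$). Substituting this into the factorisation of the dual zeta function and raising it to the power $(-1)^n$ yields
\[
\left(\overline{\zeta}^{\orb,[1]}_{\widetilde{f},\widetilde{G}\rtimes S}(t)\right)^{(-1)^n}=\prod_{s}(1-(-1)^n{\mathbf e}[s]t)^{-(-1)^n\widetilde{a}_s}=\prod_{s}(1-(-1)^n{\mathbf e}[s]t)^{-a_s}=\overline{\zeta}^{\orb,[1]}_{f,G\rtimes S}(t),
\]
which is precisely the asserted identity.

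The one point deserving care — the main, though mild, obstacle — is justifying that the E-to-zeta dictionary respects the filtration by levels rather than only relating the total invariants. This amounts to the statement that the correspondence ``eigenvalue $\leftrightarrow$ monomial $\overline{t}^s$'' never mixes classes lying over distinct conjugacy classes of $S$; it holds because the permutation type $[\sigma]$ is a well-defined invariant of the class $[g]$, and the definitions of $E^{[\sigma]}$ and $\overline{\zeta}^{\orb,[\sigma]}$ group contributions by exactly this datum. Once this compatibility is recorded, only the sign bookkeeping above remains. I would also note, looking ahead, that the same argument applied on an arbitrary level $[\sigma]$ shows that Conjecture~\ref{conj:E} on $[\sigma]$ implies Conjecture~\ref{conj:zeta} on $[\sigma]$; here we obtain the level $[1]$ case because Theorem~\ref{theo:E} supplies Conjecture~\ref{conj:E} on that level.
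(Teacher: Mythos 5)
Your proposal is correct and is essentially the paper's own argument: the paper deduces the corollary in one line from Theorem~\ref{theo:E} via the E-function-to-zeta-function dictionary at the end of Section~\ref{sec:orb_notions}, exactly as you do. Your additional check that the dictionary respects levels (since each eigenvalue contribution comes from a single conjugacy class $[g]$, whose level $[\sigma]$ is well defined) and your sign bookkeeping with $((-1)^n)^2=1$ simply make explicit what the paper leaves implicit.
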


Combining Theorem~\ref{theo:orb_zeta} and Corollary~\ref{cor:E}, we obtain the following result.
\begin{theorem} 
Let $f(x_1, \ldots , x_n)$ be an invertible polynomial 
satisfying the conditions of Theorem~\ref{theo:E}. 
Let $G \subset G_f$ and let $S$ be a subgroup of $S_n$ satisfying PC and such that the centralizer $C_S(\sigma)$ of each element $\sigma \in S, \sigma \neq 1$,  is generated by $\sigma$. Then one has
\[
  \overline{\zeta}^{\orb}_{f,G \rtimes S}(t)=
  \left(\overline{\zeta}^{\orb}_{\widetilde{f},\widetilde{G} \rtimes S}(t)\right)^{(-1)^n}\,.
\]
\end{theorem}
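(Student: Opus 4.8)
The plan is to reduce the global identity to the level-wise identities that are already available and then reassemble. By the factorization of the reduced orbifold zeta function over levels recorded in Section~\ref{sec:among_levels},
\[
\overline{\zeta}^{\orb}_{f, G\rtimes S}(t)=
\prod_{[\sigma]\in {\Conj S}}\overline{\zeta}^{\orb,[\sigma]}_{f, G\rtimes S}(t),
\]
and the analogous product holds for the dual pair $(\widetilde{f},\widetilde{G}\rtimes S)$ over the \emph{same} indexing set $\Conj S$, since the one permutation group $S$ acts on both copies of $\CC^n$. Hence it suffices to establish the level-wise symmetry (\ref{eqn:conj_zeta}) for every $[\sigma]\in\Conj S$ and then multiply.

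First I would treat the non-trivial levels. For $\sigma\neq 1$ the standing hypothesis that $C_S(\sigma)$ is generated by $\sigma$ is exactly the hypothesis of Theorem~\ref{theo:orb_zeta}, which therefore yields
\[
\overline{\zeta}^{\orb,[\sigma]}_{f,G \rtimes S}(t)=
\left(\overline{\zeta}^{\orb,[\sigma]}_{\widetilde{f},\widetilde{G} \rtimes S}(t)\right)^{(-1)^n}
\]
for each such $\sigma$. Next I would treat the level $1$ separately, because Theorem~\ref{theo:orb_zeta} genuinely does not apply there: its hypothesis reads $C_S(1)=\langle 1\rangle$, and since $C_S(1)=S$ this forces $S$ to be trivial. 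For $[1]$ I would instead invoke Corollary~\ref{cor:E}, whose hypotheses are precisely the conditions of Theorem~\ref{theo:E} that we have assumed; it supplies the same symmetry on level $[1]$. Together these two inputs cover all of $\Conj S$.

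Finally I would reassemble. Since the exponent $(-1)^n$ is independent of the level, the product of the level-wise identities gives
\[
\overline{\zeta}^{\orb}_{f,G \rtimes S}(t)=\prod_{[\sigma]\in{\Conj S}}\left(\overline{\zeta}^{\orb,[\sigma]}_{\widetilde{f},\widetilde{G} \rtimes S}(t)\right)^{(-1)^n}=\left(\overline{\zeta}^{\orb}_{\widetilde{f},\widetilde{G} \rtimes S}(t)\right)^{(-1)^n},
\]
which is the asserted identity. The entire mathematical content already sits in the two cited results, so the proof is essentially a two-line assembly of Theorem~\ref{theo:orb_zeta} and Corollary~\ref{cor:E}. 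The only point requiring care — and the mild obstacle I would watch — is the bookkeeping of levels: one must check that the centralizer condition tiles every non-trivial level while the assumptions of Theorem~\ref{theo:E} cover the level $1$, and that the two sets of hypotheses imposed in the statement are mutually consistent (they are, since PC and the centralizer condition feed Theorem~\ref{theo:orb_zeta}, while the conditions of Theorem~\ref{theo:E} feed Corollary~\ref{cor:E}, and nothing forces these to clash). I expect no genuine difficulty beyond verifying that the two regimes leave no level uncovered.
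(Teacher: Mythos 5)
Your proposal is correct and is essentially identical to the paper's own argument: the paper states this theorem with the one-line justification ``Combining Theorem~\ref{theo:orb_zeta} and Corollary~\ref{cor:E}'', i.e.\ exactly your assembly of the level-wise factorization from Section~\ref{sec:among_levels}, Theorem~\ref{theo:orb_zeta} for the levels $[\sigma]$ with $\sigma\neq 1$, and Corollary~\ref{cor:E} for the level $[1]$. Your added observation that Theorem~\ref{theo:orb_zeta} cannot cover the level $1$ (since $C_S(1)=S$) is precisely the reason the paper needs Corollary~\ref{cor:E}, so nothing is missing.
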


\section{Example}\label{sec:example}
Here we compute the E-function for a rather simple example with $S=\ZZ_2=\langle s \rangle$. We showed that under some conditions at the level 1 the symmetry is essentially given by the mirror map defined by Krawitz. There exists the hope that on other levels the symmetry may be given by certain analogues of  the Krawitz mirror map. It appears to be difficult (if possible) to find this analogue even for this simple example.

Let 
\[ f(x_1,x_2,x_3,x_4)=x_1^3x_2+x_2^5x_3+x_3^3x_4+x_4^5x_1
\]
(of loop type), $S=\ZZ_2=\langle (13)(24) \rangle$, $G_f=\ZZ_{224}=\langle \sigma \rangle$, where $\sigma$ can be taken of the form 
$\sigma = \frac{1}{224}(1,-3,15,-45)$.
Here we use the notation $(\alpha_1, \alpha_2, \alpha_3, \alpha_4)$ ($\alpha_i \in \QQ$) for the element 
$({\mathbf e}[\alpha_1], {\mathbf e}[\alpha_2], {\mathbf e}[\alpha_3], {\mathbf e}[\alpha_4]) \in (\CC^\ast)^4$.
Note that $\widetilde{f}=f$. Any subgroup of $G_f$ is preserved by $S$. Let us take $G=\ZZ_8 = \langle \delta \rangle$, where
$\delta = \sigma^{28} = \frac{1}{8}(1,-3,-1,3)$
(and therefore $\widetilde{G}=\ZZ_{28}$).

At the level 1, one has
\begin{eqnarray*} \lefteqn{E^{[1]}(f,G \rtimes S)(t, \overline{t}) = E^{[1]}(\widetilde{f},\widetilde{G} \rtimes S)(t^{-1}, \overline{t})} \\
&= & \left(\frac{\overline{t}}{t} \right)^{-\frac{8}{7}} +\left(\frac{\overline{t}}{t} \right)^{-\frac{6}{7}} + 2\left(\frac{\overline{t}}{t} \right)^{-\frac{4}{7}}+ \left(\frac{\overline{t}}{t} \right)^{-\frac{3}{7}}+3\left(\frac{\overline{t}}{t} \right)^{-\frac{2}{7}}+\left(\frac{\overline{t}}{t} \right)^{-\frac{1}{7}}+8 \\
& & {} + \left(\frac{\overline{t}}{t} \right)^{\frac{1}{7}}+3\left(\frac{\overline{t}}{t} \right)^{\frac{2}{7}}+\left(\frac{\overline{t}}{t} \right)^{\frac{3}{7}}+2\left(\frac{\overline{t}}{t} \right)^{\frac{4}{7}}+\left(\frac{\overline{t}}{t} \right)^{\frac{6}{7}}+\left(\frac{\overline{t}}{t} \right)^{\frac{8}{7}}.
\end{eqnarray*}

At the level $[s]$, we have two conjugacy classes of elements $(\lambda , s) \in G \rtimes S$, namely, $[(1,s)]$ and $[(\delta,s)]$. For the representatives $(1,s)$ and $(\delta, s)$ of them,  the fixed point sets are 2-dimensional and are given by the equations $x_3=ax_1$, $x_4=bx_2$ for certain complex numbers $a,b$, where $a=b=1$ for $(1,s)$. Therefore, in the coordinates $x_1$ and $x_2$ the Milnor fibres inside them are given by the equation $a'x_1^3x_2+b'x_2^5x_1=1$ ($a'=b'=2$ for $(1,s)$). A basis of $\Omega_{f^{\langle ( \bullet , s) \rangle}}$ is given by the monomials $x_1^{k_1}x_2^{k_2}dx_1dx_2$ with $0 \leq k_1 \leq 2$, $0 \leq k_2 \leq 4$. The centralizers of $(1,s)$ and of $(\delta , s)$ are $\langle (\delta^4,1),(1,s) \rangle$ and $\langle (\delta^4,1), (\delta,s) \rangle$ respectively, where $\delta^4= \frac{1}{2}(1,1,1,1)$. Therefore the monomials which give a basis of $\Omega_{f^{\langle ( \bullet , s) \rangle}}^{C_{G \rtimes S}(\bullet,s)}$ are those of even order. The ages of the elements $(1,s)$ and $(\delta,s)$ are equal to 1. Therefore, in the E-function on level $[s]$, we only have to count the weights of the monomials in Table~\ref{tab:mirror}.
\begin{table}[h]
\begin{center}
\begin{tabular}{ccc}
\hline
Monomials & Weights & Elements of $\widetilde{G} \rtimes S$ \\
\hline
$dx_1dx_2$ & $- \frac{4}{7}$ & $\left\{ \begin{array}{c} (\frac{1}{28}(18,2,18,2),s) \sim (\frac{1}{28}(4,16,4,16),s), \\ (\frac{1}{28}(25,23,11,9),s) \sim (\frac{1}{28}(11,9,25,23),s) \end{array} \right\}$ \\
$x_1^2dx_1dx_2$ & $0$ &  \\
$x_1x_2dx_1dx_2$ & $ - \frac{1}{7}$ & $\left\{ \begin{array}{c} (\frac{1}{28}(8,4,8,4),s) \sim (\frac{1}{28}(22,18,22,18),s), \\ (\frac{1}{28}(15,25,1,11),s) \sim (\frac{1}{28}(1,11,15,25),s) \end{array} \right\}$ \\
$x_2^2dx_1dx_2$ & $- \frac{2}{7}$ & $\left\{ \begin{array}{c} (\frac{1}{28}(16,8,16,8),s) \sim (\frac{1}{28}(2,22,2,22),s), \\ (\frac{1}{28}(9,15,23,1),s) \sim (\frac{1}{28}(23,1,9,15),s) \end{array} \right\}$ \\
$x_1^2x_2^2dx_1dx_2$ & $\frac{2}{7}$ & $\left\{ \begin{array}{c} (\frac{1}{28}(26,6,26,6),s) \sim (\frac{1}{28}(12,20,12,20),s) , \\ (\frac{1}{28}(5,27,19,13),s) \sim (\frac{1}{28}(19,13,5,27),s) \end{array} \right\}$ \\
$x_1x_2^3dx_1dx_2$ & $\frac{1}{7}$ & $\left\{ \begin{array}{c} (\frac{1}{28}(6,10,6,10),s) \sim (\frac{1}{28}(20,24,20,24),s), \\ (\frac{1}{28}(27,17,13,3),s) \sim (\frac{1}{28}(13,3,27,17),s) \end{array} \right\}$ \\
$x_2^4dx_1dx_2$ & $0$ & \\
$x_1^2x_2^4dx_1dx_2$ & $\frac{4}{7}$ & $\left\{ \begin{array}{c} (\frac{1}{28}(24,12,24,12),s) \sim (\frac{1}{28}(10,26,10,26),s), \\  (\frac{1}{28}(17,19,3,5),s) \sim (\frac{1}{28}(3,5,17,19),s) \end{array} \right\}$ \\
\hline
\end{tabular}
\end{center}
\caption{Correspondence between monomials and elements of $\widetilde{G} \rtimes S$} \label{tab:mirror}
\end{table}

The weight of an element $(\widetilde{g},s) \in \widetilde{G} \rtimes S$ is equal to ${\rm age}(\widetilde{g})- \frac{n-n_{\widetilde{g}}}{2}$. 
For the weight 0 there are two conjugacy classes $\{(\widetilde{\delta}^{14},s), (\widetilde{\delta}^0,s)\}$ and $\{ (\widetilde{\delta}^7,s),(\widetilde{\delta}^{21},s)\}$, where 
\[ \widetilde{\delta}^{14}=\frac{1}{2}(1,1,1,1),  \quad \widetilde{\delta}^7=\frac{1}{4}(1,-1,-1,1), \quad \widetilde{\delta}^{21}= \frac{1}{4}(-1,1,1,-1).
\]
Representatives of them have 2-dimensional fixed point sets and they give 4 generators with weight $- \frac{n_g}{2}=0$ (two of them each).

From Table~\ref{tab:mirror} one gets
\begin{eqnarray*} \lefteqn{E^{[s]}(f,G \rtimes S)(t, \overline{t}) = E^{[s]}(\widetilde{f},\widetilde{G} \rtimes S)(t^{-1}, \overline{t})} \\
&= & 2 \left( \left(\frac{\overline{t}}{t} \right)^{-\frac{4}{7}}+ \left(\frac{\overline{t}}{t} \right)^{-\frac{2}{7}}+\left(\frac{\overline{t}}{t} \right)^{-\frac{1}{7}}+2  + \left(\frac{\overline{t}}{t} \right)^{\frac{1}{7}}+\left(\frac{\overline{t}}{t} \right)^{\frac{2}{7}}+\left(\frac{\overline{t}}{t} \right)^{\frac{4}{7}} \right).
\end{eqnarray*}

Looking at the table, one sees that the mirror map (if it exists) should send, e.g., the monomial $dx_1dx_2$ originated from one of the two conjugacy classes to the conjugacy class either of $(\frac{1}{28}(18,2,18,2),s)$ or of $(\frac{1}{28}(25,23,11,9),s)$. We did not succeed to find an analogue of the mirror map for this case.

\section*{Acknowledgements}
This work has been partially supported by DFG. The work of the second author
(Sections~\ref{sec:orb_notions}, \ref{sec:among_levels}, \ref{sec:orb_zeta}, \ref{sec:example}) was
supported by the grant 21-11-00080 of the Russian Science Foundation.
We would like to thank the referee for carefully reading our paper and for their useful comments which helped to improve the paper.


\bigskip
\noindent Leibniz Universit\"{a}t Hannover, Institut f\"{u}r Algebraische Geometrie,\\
Postfach 6009, D-30060 Hannover, Germany \\
E-mail: ebeling@math.uni-hannover.de\\

\medskip
\noindent Moscow State University, Faculty of Mechanics and Mathematics,\\
Moscow Center for Fundamental and Applied Mathematics,\\
Moscow, GSP-1, 119991, Russia\\
\& National Research University ``Higher School of Economics'',\\
Usacheva street 6, Moscow, 119048, Russia.\\
E-mail: sabir@mccme.ru

\end{document}